\newtheorem{theorem}{Theorem}[section]
\newtheorem{lemma}[theorem]{Lemma}
\theoremstyle{definition}
\newtheorem{remark}[theorem]{Remark}
\newtheorem{question}[theorem]{Question}
\newcounter{theoremintro}
\newtheorem{theoremi}[theoremintro]{Theorem}
\newcommand{\id}{{\rm id}}
\newcommand{\sC}{{\mathscr C}}
\newcommand{\sD}{{\mathscr D}}
\newcommand{\sL}{{\mathscr L}}
\newcommand{\sP}{{\mathscr P}}
\newcommand{\sQ}{{\mathscr Q}}
\newcommand{\sS}{{\mathscr S}}
\newcommand{\Zb}{{\mathbb Z}}
\newcommand{\Nb}{{\mathbb N}}
\newcommand{\eps}{\varepsilon}
\DeclareMathOperator{\dom}{dom}
\begin{document}

\title{Entropy, virtual Abelianness, and Shannon orbit equivalence}
\author{David Kerr}
\address{David Kerr,
Mathematisches Institut,
WWU M{\"u}nster,
Einsteinstr.\ 62,
48149 M{\"u}nster, Germany}
\email{kerrd@uni-muenster.de}

\author{Hanfeng Li}
\address{Hanfeng Li,
Department of Mathematics, SUNY at Buffalo, Buffalo, NY 14260-2900, USA
}
\email{hfli@math.buffalo.edu}

\date{February 20, 2022}

\begin{abstract}
We prove that if two free p.m.p.\ $\Zb$-actions are Shannon orbit equivalent then they
have the same entropy. The argument also applies more generally to yield the same conclusion
for free p.m.p.\ actions
of finitely generated virtually Abelian groups.
Together with the isomorphism theorems of Ornstein and Ornstein--Weiss
and the entropy invariance results of Austin and Kerr--Li in the non-virtually-cyclic setting,
this shows that two Bernoulli actions of any non-locally-finite countably infinite amenable group
are Shannon orbit equivalent if and only if they are measure conjugate.
We also show, at the opposite end of the stochastic spectrum,
that every $\Zb$-odometer is Shannon orbit equivalent to the universal $\Zb$-odometer.
\end{abstract}

\maketitle

On the surface it would seem that there is a fundamental incompatibility between the concepts of entropy
and orbit equivalence.
On the one hand Ornstein and Weiss proved, building on a seminal theorem of Dye in the integer case,
that any two free ergodic p.m.p.\ (probability-measure-preserving)
actions of a countably infinite amenable group are orbit equivalent,
so that any asymptotic information about the dynamics or the group that is not a mere corollary
to ergodicity or amenability can be completely obliterated \cite{Dye59,OrnWei80,OrnWei87}.
On the other hand it is precisely for amenable groups that the original Kolmogorov--Sinai form
of entropy based on the asymptotic averaging of Shannon entropies
is most naturally and generally defined,
and in that setting a rich theory has developed whose highlights include the classification
of Bernoulli actions up to measure conjugacy by their entropy, first established by
Ornstein for the integers and then by Ornstein and Weiss for all countably infinite amenable groups \cite{Orn70,OrnWei80,OrnWei87}.

It turns out, however, that there is a certain
combinatorial robustness with which entropy registers the higher-order statistics of set intersections,
permitting a degree of reshuffling in the way that orbits are parameterized.
Indeed this became manifest in the Ornstein theory of Bernoulli actions and in related research
on Kakutani equivalence \cite{Kat77,OrnRudWei82,delJunRud84}, as well as in the work of
Vershik on actions of locally finite groups \cite{Ver73,Ver95}. Ultimately these threads were
united in the abstract theory of Kammeyer and Rudolph
which describes, for actions of a fixed countable amenable group, how entropy may or may not be preserved
under orbit equivalences subject to various conditions, and how entropy can serve as a
complete invariant, up to some restricted notion of orbit equivalence,
within certain classes of actions \cite{KamRud97,KamRud02,Rud05}.

More recently, taking a rather different geometric perspective not
based on the combinatorial shuffling of orbit segments, and inspired by growing interest in
integrable notions of equivalence in the study of measure rigidity,
Austin proved that if $G$ and $H$ are any two finitely generated amenable groups and
$G\curvearrowright (X,\mu )$ and $H\curvearrowright (Y,\nu )$
are free p.m.p.\ actions which are integrably orbit equivalent (i.e., there is an orbit equivalence between them whose
cocycles, when restricted to each group element and composed with a fixed word-length metric, give integrable functions)
then the actions have the same entropy \cite{Aus16}.
Curiously, a separate argument was required to handle the virtually cyclic case
(one based in fact on Kakutani equivalence), while the proof
in the non-virtually-cyclic case actually shows that the equality of entropies still holds if one instead
assumes the weaker relation of Shannon orbit equivalence, in which the cocycle partitions
associated to the orbit equivalence are assumed to have finite Shannon entropy.
In \cite{KerLi21} this entropy invariance under Shannon orbit equivalence was verified to hold more generally
when $G$ and $H$ are any countable amenable groups which are neither virtually cyclic nor locally finite.
The non-local-finiteness assumption is necessary here,
as a theorem of Vershik shows that any two nontrivial Bernoulli actions of a countably infinite locally finite group
are boundedly orbit equivalent \cite{Ver73,Ver95} (see the discussion in the introduction to \cite{KerLi21}).
It has remained an open question however whether the non-virtual-cyclicity condition can be dropped,
and in particular whether a Shannon orbit equivalence between free p.m.p.\ $\Zb$-actions
preserves entropy.
The first goal of the present paper is to establish the following.

\begin{theoremi}\label{T-entropy}
Let $G$ and $H$ be finitely generated virtually Abelian groups.
Let $G\stackrel{\alpha}{\curvearrowright} (X,\mu )$
and $H\stackrel{\beta}{\curvearrowright} (Y,\nu )$ be free p.m.p.\ actions
which are Shannon orbit equivalent. Then $h(\alpha ) = h(\beta )$.
\end{theoremi}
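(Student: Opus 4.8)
The plan is to establish the two inequalities $h(\alpha)\ge h(\beta)$ and $h(\alpha)\le h(\beta)$ separately; since the hypotheses are symmetric in the two actions it is enough to explain one of them. Each inequality will come from transporting finite partitions across the orbit equivalence $\phi\colon X\to Y$ and comparing entropies, and in each case the comparison amounts to reading names inside one of the two acting groups. When the group in which names are read is not virtually cyclic the corresponding inequality is already contained in \cite{KerLi21} (a finitely generated infinite group is never locally finite), so the genuinely new situation is that this group is virtually cyclic. Using that a finitely generated virtually Abelian group contains a finite-index copy of $\Zb^{d}$, that entropy is multiplied by the index on passing to such a subgroup, and that the case $d\ge 2$ is non-virtually-cyclic and hence disposed of by \cite{KerLi21}, I would reduce matters to the model problem in which the relevant group is $\Zb$; the passage to the finite-index subgroup at the level of orbit equivalence is a structural reduction that I separate from the analytic heart, which lies in the one-dimensional target.

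Concretely, suppose $H=\Zb$, let $S$ generate $\beta$, and let $c\colon G\times X\to\Zb$ be the cocycle defined by $\phi(\alpha_{g}x)=S^{\,c(g,x)}\phi(x)$; the Shannon hypothesis is exactly that the cocycle partitions have finite Shannon entropy. Fix a finite partition $\cQ$ of $Y$ and $\eps>0$. Freeness of both actions makes $g\mapsto c(g,x)$ a bijection of $G$ onto $\Zb$ for a.e.\ $x$, so for a F{\o}lner set $F\subseteq G$ the image $W_{F}(x):=c(F,x)$ consists of $|F|$ distinct integers, and the $\cQ$-name of $\phi(x)$ read along $W_{F}(x)$ is a function of the $\phi^{-1}\cQ$-name of $x$ read along $F$. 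The target estimate is
\begin{equation*}
\frac{1}{|F|}\,\mathrm{H}_{\mu}\!\Big(\bigvee_{g\in F}\alpha_{g}^{-1}\cP\Big)\;\ge\;h(\beta,\cQ)-\eps ,
\end{equation*}
where $\mathrm{H}_{\mu}$ denotes Shannon entropy and $\cP=\phi^{-1}\cQ\vee\cR$, with $\cR$ a fixed finite partition of $X$ encoding enough of the cocycle to reconstruct the placement map $g\mapsto c(g,x)$ on $F$. The extra cost of $\cR$ is at most the finite Shannon entropy of the cocycle per element of $F$, which for a suitable generating description is absorbed into $\eps$. Taking the supremum over $\cQ$ then yields $h(\alpha)\ge h(\beta)$, and the reverse inequality follows either symmetrically or, when $G$ is not virtually cyclic, from \cite{KerLi21}.

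The main obstacle is to make the displayed estimate honest, and it is entirely one-dimensional. The quantity $h(\beta,\cQ)$ is the limiting average of $\mathrm{H}_{\nu}$ of $\cQ$-names read over genuine intervals, so to compare it with the $\cQ$-name over $W_{F}(x)$ I must pass from $W_{F}(x)$ to its interval hull $\widehat{W}_{F}(x)\subseteq\Zb$. The difference consists of the ``holes'' $\widehat{W}_{F}(x)\setminus W_{F}(x)$, which, by the bijectivity above, are exactly the values $c(g,x)$ with $g\in G\setminus F$ that happen to fall inside the hull; filling in their $\cQ$-symbols costs at most $|\widehat{W}_{F}(x)\setminus W_{F}(x)|\cdot\log|\cQ|$. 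Everything therefore hinges on showing that these incursions have vanishing density on average, i.e.\ that the expected number of holes is $o(|F|)$ along the F{\o}lner sequence. This is precisely the step that the higher-dimensional argument of \cite{KerLi21} never has to face: there the failure of the cocycle image to be F{\o}lner is a genuine boundary effect that is lower-dimensional relative to the bulk, whereas in $\Zb$ the topological boundary is zero-dimensional and a single deep excursion of the cocycle can create many holes far inside the hull. Worse, finite Shannon entropy places no moment bound on the cocycle values, so deep excursions cannot simply be excluded by integrability. I expect the real work of the paper to be a delicate one-dimensional counting argument, controlling long excursions of the bijection $g\mapsto c(g,x)$ by combining the F{\o}lner property of $F$ with the finite Shannon entropy of the cocycle, that shows the holes are nonetheless negligible in density; this is the crux on which Theorem~\ref{T-entropy} rests.
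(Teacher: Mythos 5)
There is a genuine gap, and it sits exactly where you place your trust in the partition $\cR$. Reconstructing the placement map $g\mapsto c(g,x)$ on $F$ from a partition costs, per element of $F$, on the order of the Shannon entropy of a single cocycle partition --- a fixed positive constant, not something that can be ``absorbed into $\eps$.'' With that accounting your displayed estimate only gives $h(\alpha)\geq h(\beta)-C$ for a constant $C$ depending on the orbit equivalence, which is useless. The entire analytic content of the paper's argument is the device that kills this constant: for $G$ infinite virtually cyclic one takes $a\in G$ generating a finite-index normal subgroup and considers the iterated cocycle partitions $\sQ_{a^n}$, whose normalized entropies define an invariant $h(\kappa)=\inf_n \frac{1}{|G:\langle a\rangle|n}H(\sQ_{a^n})$; Lemma~\ref{L-control} gives $h(\beta)+h(\kappa)\geq h(\alpha)$, and Lemma~\ref{L-zero} shows $h(\kappa)=0$ when $H$ is finitely generated virtually Abelian. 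The vanishing comes from the cocycle identity $\kappa(a^n,x)=\kappa(a,a^{n-1}x)\cdots\kappa(a,x)$ together with a product-counting lemma (Lemma~\ref{L-product}): in a group of polynomial growth of order $k$ with a finite-index Abelian subgroup, an ordered product of $n$ factors, all but an $\eps$-fraction of which lie in a fixed $r$-ball, takes at most $e^{c_1\eps n}c_2(rn)^k$ values, because the Abelian parts can be commuted together. Note that your proposal never actually uses virtual Abelianness of the target group, whereas the paper flags this hypothesis as indispensable precisely for this complexity bound; moreover you have the roles inverted --- the new case is when the \emph{source} $G$ of the cocycle is virtually cyclic (the iteration runs along its cyclic direction), while $H$, where names are read, is an arbitrary finitely generated virtually Abelian group, not $\Zb$.

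Your identified ``crux'' --- filling the holes in the interval hull of $c(F,x)$ --- is both unsupported (as you yourself note, finite Shannon entropy yields no moment control on excursions, and no such density estimate is proved or needed in the paper) and avoidable. The paper sidesteps interval geometry entirely: Lemma~\ref{L-invt} shows that for most $x$ the image sets $\kappa(F_n,x)$ are $(K,\delta)$-invariant in $H$, and the Jewett--Krieger theorem lets one replace an arbitrary finite partition by a $\beta$-uniform one $\sP$, for which unique ergodicity of the topological model gives $h(\beta,\sP^R)+\eps\geq \frac{1}{|L|}\log|\sP^{RL}|$ along \emph{every} sufficiently invariant finite set $L$ --- no hull, no hole-filling. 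The transfer itself is done by conditioning on a single cell $C$ of $\sQ_a^{I_n}$, on which the placement map is literally constant, so that the $\sP^{F_n}$-cells meeting $C$ inject into $\sP^{RL_0}$-cells; the number of such cells $C$ is what $h(\kappa)=0$ controls. Finally, your reduction to $\Zb^d$ via a finite-index subgroup is not legitimate at the level of orbit equivalence: restricting both actions to finite-index subgroups destroys the orbit matching, which is why the paper keeps the full groups and works with $\langle a\rangle$ normal of finite index together with a set $R$ of coset representatives, using F{\o}lner sets $F_n=RI_n$.
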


The proofs of entropy invariance for Shannon orbit equivalence in \cite{Aus16,KerLi21}
rely on the existence of sparse but coarsely dense trees inside of F{\o}lner sets, which
requires the group to have superlinear growth and thereby rules out virtually cyclicity.
The entropy already registers along these trees when it is computed at a fine enough resolution,
while the exponential complexity of the cocycles, relative to the size
of the ambient F{\o}lner set, is small when restricted to the tree, so that one can effectively
assume the cocycle values along the tree to be constant and thus transfer the
entropy growth over to the second action. This already yields Theorem~\ref{T-entropy}
in the case that neither $G$ nor $H$ is virtually cyclic, and also yields an entropy inequality
if one of $G$ and $H$ is not virtually cyclic. For finite acting groups
the entropy is equal to the Shannon entropy of the space divided by the cardinality of the group
and hence is preserved under any orbit equivalence.
The problem thus reduces to establishing
the entropy inequality $h(\alpha ) \leq h(\beta )$ when $G$ is infinite and virtually cyclic.
As in \cite{Aus16,KerLi21}, the strategy is to iteratively apply the cocycle identity in order to
bound the cocycle complexity
across sparse but coarsely dense subsets of F{\o}lner subsets of $G$, except that now these subsets must be
highly separated (e.g, in the case of $\Zb$, sparse sets of points of roughly uniform distribution
inside large intervals), so that we can no longer argue in
a geometric way entirely within $G$.
To compensate for this we employ the algebraic structure of $H$ as a finitely generated virtually Abelian group
in tandem with the ``almost linear'' structure of $G$ as a virtually cyclic group.
Unlike in \cite{KerLi21}, where no restrictions are imposed on $H$
beyond amenability or soficity, the hypotheses on $H$ of virtual Abelianness and finite generation
are now required to secure the desired small exponential complexity
(see Lemmas~\ref{L-product} and \ref{L-zero}).
Another novelty of our argument is the use of the
Jewett--Krieger theorem, which, for the purpose of obtaining lower bounds, allows us to express
dynamical entropy by means of the cardinality of certain partitions instead of their Shannon entropy.

We do not know whether the statement of Theorem~\ref{T-entropy} holds when $G$ and $H$ are
non-locally-finite countably infinite amenable groups
and one of them is assumed to be virtually Abelian and finitely generated
(although one does get an entropy inequality by \cite{KerLi21}).
In the proof of Austin's result the virtually cyclic case can be dealt with separately
since, by a result of  Bowen \cite{Aus16b}, if two
free p.m.p.\ actions of finite generated groups are integrably orbit equivalent then the groups have the
same growth, so that either both groups are virtually cyclic or neither is.
This conclusion no longer holds for Shannon orbit equivalence, as illustrated in Remark~\ref{R-odometer}.
Nevertheless, by concentrating our attention on single groups (i.e., the case $G=H$)
we can use Theorem~\ref{T-entropy} as follows to resolve the remaining case,
within the amenable context,
of Shannon orbit equivalence rigidity for Bernoulli actions.

By work of Ornstein and Weiss \cite{OrnWei87}
(generalizing Ornstein's isomorphism theorem for $G=\Zb$ \cite{Orn70}),
two Bernoulli actions of a given countably infinite amenable group $G$ are measure conjugate
if and only if they have the same entropy. As observed in \cite{KerLi21}, this combines
with the results of Austin and Kerr--Li mentioned above to show that if
$G$ is a countably infinite amenable group which is neither virtually cyclic nor locally finite
then two Bernoulli actions of $G$ are Shannon orbit equivalent if and only if they are measure conjugate.
If $G$ is countably infinite and locally finite then this conclusion fails in a rather dramatic way,
as we saw above.
In view of the Ornstein--Weiss entropy classification of Bernoulli actions,
Theorem~\ref{T-entropy} now completes the picture for countably infinite amenable groups
by covering the remaining virtually cyclic case, so that we can assert the following.

\begin{theoremi}\label{T-Bernoulli}
Two Bernoulli actions of a non-locally-finite countable amenable group are Shannon orbit equivalent
if and only if they are measure conjugate.
\end{theoremi}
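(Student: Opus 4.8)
The plan is to reduce the statement to the entropy invariance facts already at our disposal, using the Ornstein--Weiss classification as a bridge between measure conjugacy and entropy. The forward implication is immediate: a measure conjugacy $\phi$ between two free p.m.p.\ actions of a fixed group $G$ is in particular an orbit equivalence, and since $\phi$ intertwines the two actions, the element carrying $\phi(x)$ to $\phi(g\cdot x)$ is $g$ itself. By freeness the orbit-equivalence cocycle associated to each group element is therefore constant, so the cocycle partitions are trivial and have (zero, hence) finite Shannon entropy. Thus measure conjugacy implies Shannon orbit equivalence, with no restriction on $G$ beyond freeness.

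For the converse, I would first invoke the isomorphism theorem of Ornstein and Ornstein--Weiss \cite{Orn70,OrnWei87}, according to which two Bernoulli actions of a given countably infinite amenable group are measure conjugate if and only if they have the same entropy. It therefore suffices to show that two Shannon orbit equivalent Bernoulli actions of $G$ have equal entropy. Since $G$ is non-locally-finite it is infinite, and I would split into two cases according to whether or not $G$ is virtually cyclic. If $G$ is not virtually cyclic, then it is neither virtually cyclic nor locally finite, and the entropy invariance under Shannon orbit equivalence established in \cite{KerLi21} directly yields the equality of entropies. If instead $G$ is virtually cyclic, then, being infinite, it is finitely generated, and as a virtually cyclic group it possesses a finite-index cyclic subgroup and so is virtually Abelian; Theorem~\ref{T-entropy} then applies and again gives equality of entropies. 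These two cases exhaust all possibilities for a non-locally-finite $G$, so in every case the actions have the same entropy and are therefore measure conjugate.

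The genuine content here is concentrated entirely in Theorem~\ref{T-entropy}, which supplies the previously missing virtually cyclic case; the rest of the argument is a case analysis assembling that theorem with \cite{KerLi21} and the Ornstein--Weiss classification, together with the trivial-cocycle observation for the easy direction. Accordingly I do not anticipate a substantive obstacle in this final step: the main points to verify are merely that the dichotomy ``virtually cyclic or not'' cleanly partitions the non-locally-finite amenable groups so that every such $G$ is covered, and that an infinite virtually cyclic group indeed satisfies the finite-generation and virtual-Abelianness hypotheses of Theorem~\ref{T-entropy}, both of which are routine.
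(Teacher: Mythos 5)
Your proposal is correct and matches the paper's own route: the paper assembles Theorem~\ref{T-entropy} (covering the infinite virtually cyclic case, which is finitely generated and virtually Abelian), the Kerr--Li entropy invariance from \cite{KerLi21} for groups that are neither virtually cyclic nor locally finite, and the Ornstein--Weiss classification of Bernoulli actions by entropy, exactly as you do, with the easy direction following from the triviality of the cocycles of a measure conjugacy. Your case dichotomy and the verification of the hypotheses of Theorem~\ref{T-entropy} are precisely the (routine) glue the paper uses, so there is nothing to add.
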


By a theorem of Belinskaya \cite{Bel68}, two aperiodic ergodic p.m.p.\ transformations are integrably orbit equivalent
if and only if they are flip conjugate (meaning that they are either measure conjugate
or one is measure conjugate to the inverse of the other).
Recently Carderi, Joseph, Le Ma\^{i}tre, and Tessera showed that,
for ergodic p.m.p.\ transformations, Shannon orbit equivalence is strictly weaker than
flip conjugacy \cite{CarJosLeMTes22},
and so together these results illustrate that Theorem~\ref{T-entropy}
is not merely a formal strengthening of the analogous assertion for integrable orbit equivalence from \cite{Aus16}.
What Carderi, Joseph, Le Ma\^{i}tre, and Tessera demonstrate is that if $T$ is any aperiodic
ergodic p.m.p.\ transformation such that $T^n$ is ergodic for some $n\geq 2$ then there is another
ergodic p.m.p.\ transformation to which $T$ is Shannon orbit equivalent
(in fact orbit equivalent in a certain stronger quantitative sense)
but not flip conjugate.

In the second part of the paper we further analyze this gap between Shannon and integrable orbit equivalence
by establishing the following theorem in the odometer context.
Recall that, by a theorem of Halmos and von Neumann \cite{HalvNe42}, discrete spectrum p.m.p.\ transformations are
determined up to measure conjugacy by their eigenvalues with multiplicity,
and even up to flip conjugacy since the eigenvalues form a group.
This applies in particular to odometers, where the eigenvalues are all roots of unity.
In this case the eigenvalue information is encoded by a supernatural number
$\prod_p p^{k_p}$ where $p$ ranges over the
primes and each $k_p$ belongs to $\{ 0,1,2,\dots , \infty \}$ (see Section~\ref{S-odometer}).
The universal odometer is defined by the
condition that $k_p = \infty$ for each $p$.

\begin{theoremi}\label{T-odometer}
Every $\Zb$-odometer is Shannon orbit equivalent to the universal $\Zb$-odometer.
\end{theoremi}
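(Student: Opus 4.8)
The plan is to reduce the statement to a concrete construction at the level of tail equivalence relations and then to build the orbit equivalence as an inverse limit of finite castle matchings, using a summable bookkeeping of cocycle sizes to control the Shannon entropy.

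First I would record the standard fact that, for an odometer, the orbit equivalence relation agrees modulo a null set with the tail equivalence relation on its defining mixed-radix product space, and that the generating transformation is the adic $+1$ map: two points lie in the same orbit precisely when their mixed-radix expansions differ in only finitely many coordinates. Writing the given odometer as $X=\prod_{k\ge 1}\{0,\dots,d_k-1\}$ with partial products $m_k=d_1\cdots d_k$ increasing to the supernatural number $s$, and the universal odometer as $Y=\prod_{k\ge 1}\{0,\dots,e_k-1\}$ with partial products $M_k$ increasing to $u=\prod_p p^\infty$, the theorem becomes the assertion that there is a measure isomorphism $\phi\colon X\to Y$ carrying tail classes to tail classes whose two orbit cocycles, for the adic generators, have finite Shannon entropy.

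The construction would exploit the divisibility $s\mid u$ together with the freedom to choose the generating sequences. Taking the universal digits $e_k$ to increase slowly (for instance running through the primes with the prescribed multiplicities), I would select indices $k_1<k_2<\cdots$ and $l_1<l_2<\cdots$ with $m_{k_j}\mid M_{l_j}$, with the ratios $M_{l_j}/m_{k_j}$ bounded, and with the heights growing at least geometrically. At stage $j$ the canonical Kakutani--Rokhlin castle of height $m_{k_j}$ in $X$ and that of height $M_{l_j}$ in $Y$ both exhaust the space and, as $j\to\infty$, generate the respective Borel structures; an isomorphism obtained as a \emph{coherent} inverse limit of level-to-level matchings of these castles is therefore automatically a genuine isomorphism of the two systems rather than a mere factor map. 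The adic generator moves a point one level up inside its castle except at the castle top, and I would arrange the matchings so that, off the tops and the attendant carry sets, the two generators correspond and the cocycle equals $+1$. The Shannon estimate then comes from confining every cocycle value different from $+1$ to the carry sets: at scale $j$ such a set has measure $O(1/m_{k_j})$ and the cocycle there is bounded in absolute value by $O(M_{l_j})=O(m_{k_j})$, so its contribution to the Shannon entropy is $O\!\big((\log m_{k_j})/m_{k_j}\big)$, and the geometric growth of the heights makes $\sum_j (\log m_{k_j})/m_{k_j}$ converge. The symmetric bookkeeping on the $Y$-side is intended to control the reverse cocycle in the same way.

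The main obstacle is the number-theoretic mismatch between the prime contents of $s$ and $u$. Because $u$ carries prime powers absent from $s$, one cannot match the two adic coordinate systems by an exact digit correspondence; an exact match would force conjugacy, which fails whenever $s\ne u$. The castle matchings must instead absorb the extra universal randomness through local recodings that occasionally trigger long carries, and the technical heart of the argument is to organize these recodings so that the carry sets are summably small at every scale \emph{in both directions simultaneously}. The asymmetry created by the universal side having strictly more prime content than $s$ is the delicate point, and it is precisely here that the latitude in choosing the universal generating sequence, together with the divisibility $s\mid u$, has to be used to keep the reverse cocycle, and not merely the forward one, of finite Shannon entropy.
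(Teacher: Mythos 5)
There is a genuine gap, and it is concrete: your scheme requires indices with $m_{k_j}\mid M_{l_j}$ and the ratios $M_{l_j}/m_{k_j}$ \emph{bounded}, and this is arithmetically impossible whenever the given odometer is not already the universal one. Indeed, write $M_{l_j}=c_jm_{k_j}$ with $c_j\le C$, and fix a prime $p$ whose multiplicity $k_p$ in the supernatural number $q$ of the given odometer is finite. The multiplicity of $p$ in $m_{k_j}$ is at most $k_p$ for all $j$, so the multiplicity of $p$ in $M_{l_j}$ is at most $k_p+\log_p C$, uniformly in $j$; hence $p$ cannot occur with infinite multiplicity in the limit of the $M_{l_j}$, contradicting universality of the target. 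So the ratio $M_{l_j}/m_{k_j}$ necessarily diverges, and with it the quantitative core of your entropy estimate (``cocycle bounded by $O(M_{l_j})=O(m_{k_j})$ on a carry set of measure $O(1/m_{k_j})$'') collapses. You do sense this --- your final paragraph concedes that the ``technical heart'' is to organize the recodings so that carry sets are summably small \emph{in both directions simultaneously} --- but no mechanism for doing so is proposed. Since that is exactly the content of the theorem (everything before it is the standard Dye-type castle-matching template), the proposal defers rather than performs the essential step.

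It is worth seeing how the paper gets around both difficulties, because it does so by inverting your design decisions. It never matches castles of two abstractly given systems via an isomorphism $\phi\colon X\to Y$; instead it fixes the $q$-odometer $T$ on $X$ and builds the universal odometer $S$ \emph{directly inside the orbit equivalence relation of $T$} (so the identity map is the orbit equivalence), via a doubly recursive array of ladders $\sL_{n,m}$ whose rungs are levels of the canonical $T$-towers, with spreads bounded by $d_{m-1}$ and with each stage-$n$ ladder nested in the bases of the stage-$(n-1)$ ladders. Universality is achieved not through divisibility of tower heights but by taking each ladder length $a_n$ to be a multiple of a prescribed prime $p_n$, with every prime recurring infinitely often --- this sidesteps your number-theoretic mismatch entirely. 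And the two-sided Shannon bounds are obtained by choosing the $d_n$ recursively enormous, so that consecutive scale ratios tend to infinity (the opposite of bounded ratios) and sums such as $\sum_n\beta_n\log\bigl(9w_n^2d_n/\beta_n\bigr)$ converge by fiat. The authors state explicitly that the single recursion underlying the Dye/Katznelson--Weiss procedure --- which is essentially what a coherent sequence of castle matchings amounts to --- does not appear to yield the required two-sided control, and that this is why the more intricate double recursion was developed; your proposal, as written, runs directly into that obstruction.
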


Theorem~\ref{T-odometer} provides us with example of an uncountable family
of ergodic p.m.p.\ transformations such that no two of them are integrably orbit equivalent
although one of them is Shannon orbit equivalent to all of the others. We point out that it is unknown
whether Shannon orbit equivalence is a transitive relation, and so we are not claiming
that $\Zb$-odometers are all Shannon orbit equivalent to each other, which seems to be a
much more difficult problem.

Our proof of Theorem~\ref{T-odometer} should be compared to the proof of the theorem of Dye
which asserts that any two aperiodic ergodic p.m.p.\ actions on standard probability spaces are orbit equivalent
(see \cite{KatWei91} or Section~4.9 of \cite{KerLi16}). What is special in our case is that the Rokhlin towers are
canonically given to us in a nested way by the odometer structure, so that we have a certain
combinatorial rigidity that allows us to scramble information
across orbits with some uniform quantitative control.
However, we do not see how to implement the required amount of control by simply following the recursive procedure
used to establish Dye's theorem in \cite{KatWei91,KerLi16}, and so we
have developed a more intricate doubly recursive construction that will do the job.

The proofs of Theorems~\ref{T-entropy} and \ref{T-odometer} will be carried out in Sections~\ref{S-entropy}
and \ref{S-odometer}, respectively, after we set up some notation and terminology in Section~\ref{S-preliminaries}.
\medskip

\noindent{\it Acknowledgements.}
The first author was supported by the Deutsche Forschungsgemeinschaft
(DFG, German Research Foundation) under Germany's Excellence Strategy EXC 2044-390685587,
Mathematics M{\"u}nster: Dynamics--Geometry--Structure, and by the SFB 1442 of the DFG.
The second author was supported by NSF grant DMS-1900746.

\section{Preliminaries}\label{S-preliminaries}

Partitions of measure spaces are always assumed to be measurable,
and we always ignore sets of measure zero in a partition, so that when we speak of the cardinality
of a partition we mean the cardinality of the collection of its nonnull members.
If $\sP$ is a partition and $F$ is a finite subset of a group acting on the space
then we write $\sP^F$ for the join $\bigvee_{g\in F} g^{-1} \sP$.

Let $G\curvearrowright (X,\mu )$ and $H\curvearrowright (Y,\nu )$ be free
p.m.p.\ actions of countable groups.
The actions are {\it orbit equivalent} if there is a measure isomorphism $\varphi$ from a $G$-invariant
conull set $X_0 \subseteq X$ to an $H$-invariant conull set $Y_0 \subseteq Y$ such that $\varphi (Gx) = H\varphi (x)$
for all $x\in X_0$. Such a map $\varphi$ is called an {\it orbit equivalence}, and associated to it are cocycles
$\kappa : G\times X \to H$ and $\lambda : H\times Y\to G$ defined a.e.\ by
$\kappa (g,x)\varphi (x) = \varphi (gx)$ and $\lambda (s,y)\varphi^{-1} (y) = \varphi^{-1} (sy)$, with
freeness guaranteeing a.e.\ uniqueness. They satisfy the {\it cocycle identities}
$\kappa (gh,x) = \kappa (g,hx) \kappa (h,x)$ and $\lambda (st,y) = \lambda (s,ty) \lambda (t,y)$.
From the cocycle $\kappa$ we obtain, for every $g\in G$, a countable partition
$\{ X_{g,s} : s\in H \}$ of $X$ where $X_{g,s} = \{ x\in X : \kappa (g,x) = s \}$. The cocycle $\lambda$
similarly yields a partition $\{ Y_{s,g} : g\in G \}$ of $Y$ for every $s\in H$.
We refer to these partitions as the {\it cocycle partitions} associated to $\kappa$ and $\lambda$.

The {\it Shannon entropy} of a countable partition $\sP$ of a probability space $(X,\mu )$ is defined by
$H(\sP ) = \sum_{A\in\sP} -\mu (A) \log \mu (A)$, and its conditional version with respect
to a second countable partition $\sQ$
by $H(\sP |\sQ ) = \sum_{B\in\sQ} \sum_{A\in\sP} -\mu (A\cap B) \log [\mu (A\cap B)/\mu (B)]$.
We also employ the notation $H(\sP )$, using the same formula,
when $\sP$ is any countable disjoint collection of measurable subsets of $X$. In Section~\ref{S-odometer}
we use the fact that if $\sP$ is a countable partition of $X$ and $\{ B_i \}_{i\in I}$ is a countable
collection of measurable subsets of $X$ with $\bigcup_{i\in I} B_i = X$
then $H(\sP ) \leq \sum_{i\in I} H(\sP_{B_i} )$ where
$\sP_{B_i}$ denotes the restriction of $\sP$ to $B_i$, i.e., the partition $\{ A\cap B_i : A\in\sP \}$
of $B_i$. This is a consequence
of the observation that for all disjoint measurable sets $A,B\subseteq X$ one has
$H(\{ A\sqcup B \}) \leq H(\{ A,B \}) = H(\{ A \}) + H(\{ B \})$.
In Section~\ref{S-entropy} we use standard facts about Shannon entropy that can be found
for example in Sections~9.1 and 9.2 of \cite{KerLi16}.

Returning to the orbit equivalence scenario from above, if the cocycle partitions associated to $\kappa$
all have finite Shannon entropy then
we say that $\kappa$ is {\it Shannon}, and if both $\kappa$ and $\lambda$ are Shannon
then we refer to $\varphi$ as a {\it Shannon orbit equivalence}.
When such a $\varphi$ exists we say that the actions $G\curvearrowright (X,\mu )$
and $H\curvearrowright (Y,\nu )$ are {\it Shannon orbit equivalent}.
Note that, by the cocycle identities and the subadditivity of Shannon entropy with respect to joins,
the partitions $\{ X_{g,s} : s\in H \}$ for $g\in G$ all have finite Shannon entropy as soon as we know
that the ones for $g$ belonging to some generating set do.

In the case that $G$ and $H$ are finitely generated and are equipped with word-length metrics $|\!\cdot\! |_G$
and $|\!\cdot\! |_H$, we can ask whether the integrals $\int_X |\kappa (g,x)|_H \, d\mu$ and
$\int_Y |\lambda (s,x)|_G \, d\nu$ are finite for all $g\in G$ and $s\in H$, in which case we say that
$\varphi$ is an {\it integrable orbit equivalence} (this doesn't depend on the choice of word-length metrics).
The actions are {\it integrably orbit equivalent} if such
a $\varphi$ exists. A theorem of Belinskaya says that two aperiodic ergodic p.m.p.\ $\Zb$-actions
are integrably orbit equivalent if and only if they are flip conjugate, i.e., measure conjugate up to
an automorphism of $\Zb$ \cite{Bel68} (see the appendix in \cite{CarJosLeMTes22} for a short proof).
For $\Zb^2$-actions, however, even bounded
orbit equivalence (in which the cocycles have finite image when restricted to each group element) is
considerably weaker than measure conjugacy modulo a group automorphism \cite{FieFri86}.
By Lemma~2.1 of \cite{Aus16}, integrable orbit equivalence implies
Shannon orbit equivalence.

Suppose that $G$ is amenable and let $\{ F_n \}$ be a F{\o}lner sequence for $G$,
i.e., a sequence of nonempty finite subsets of $G$ satisfying
$\lim_{n\to\infty} |gF_n \Delta F_n |/|F_n| = 0$ for all $g\in G$.
The entropy of
a finite partition $\sP$ under the action $G\stackrel{\alpha}{\curvearrowright} (X,\mu )$ is defined by
\[
h(\alpha,\sP ) = \lim_{n\to\infty} \frac{1}{|F_n |} H(\sP^{F_n} ) ,
\]
a limit that always exists and
is equal to
\[
\inf_F \frac{1}{|F|} H(\sP^F )
\]
where $F$ ranges over all nonempty finite subsets of $G$ (see Section~9.3 of \cite{KerLi16}).
The entropy of the action is then defined by
\[
h(\alpha ) = \sup_\sP h(\alpha,\sP )
\]
where $\sP$ ranges over all finite partitions of $X$.
For more details see Chapter~9 of \cite{KerLi16}.

\section{Entropy and Shannon orbit equivalence}\label{S-entropy}

Given two finite subsets $F$ and $K$ of a group $G$ and a $\delta > 0$, we say that $F$ is
{\it $(K,\delta )$-invariant} if $| \{ t\in F : Kt \subseteq F \} | \geq (1-\delta ) |F|$.

\begin{lemma}\label{L-invt}
Let $G$ and $H$ be countable amenable groups. Let $G\stackrel{\alpha}{\curvearrowright} (X,\mu )$
and $H\stackrel{\beta}{\curvearrowright} (Y,\nu )$ be free p.m.p.\ actions
which are orbit equivalent, and
let $\kappa : G\times X\to H$ and $\lambda : H\times Y\to G$ be the associated cocycles.
Let $\{ F_n \}$ be a F{\o}lner sequence for $G$.
Let $L$ be a finite subset of $H$ and $\delta > 0$.
Then for every sufficiently large $n\in\Nb$ there is a set
$X_n\subseteq X$ with $\mu (X_n)\geq 1-\delta$ such that
the sets $\kappa (F_n ,x)$ for $x\in X_n$ are $(L,\delta )$-invariant.
\end{lemma}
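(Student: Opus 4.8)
The plan is to show that the set of $x$ for which $\kappa(F_n,x)$ fails to be $(L,\delta)$-invariant has $\mu$-measure tending to $0$, by converting the invariance condition on the subset $\kappa(F_n,x)\subseteq H$ into a Følner-type condition for $F_n$ inside $G$ and then averaging.

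First I would record the geometric meaning of $\kappa(F_n,x)$. Since $\kappa(g,x)\varphi(x)=\varphi(gx)$ and $\varphi$ is a bijection between the orbits, the map $g\mapsto\kappa(g,x)$ is injective by freeness, so $|\kappa(F_n,x)|=|F_n|$, and right multiplication by $\varphi(x)$ identifies $\kappa(F_n,x)$ with $\varphi(F_nx)$ inside the $H$-orbit of $\varphi(x)$. Under this identification, for $t=\kappa(g,x)$ the requirement $Lt\subseteq\kappa(F_n,x)$ becomes $L\varphi(gx)\subseteq\varphi(F_nx)$. Pulling each $\ell\in L$ back through the inverse cocycle via $\varphi^{-1}(\ell\varphi(gx))=\lambda(\ell,\varphi(gx))gx$ and using freeness, this is equivalent to $\lambda(\ell,\varphi(gx))g\in F_n$ for every $\ell\in L$. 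Hence $\kappa(F_n,x)$ fails to be $(L,\delta)$-invariant exactly when the number of $g\in F_n$ admitting some $\ell\in L$ with $\lambda(\ell,\varphi(gx))g\notin F_n$ exceeds $\delta|F_n|$; call the set of such $x$ the bad set $B_n$.

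Second I would bound the $\mu$-average of this count. Summing over $g\in F_n$ and $\ell\in L$ the indicator of the event $\lambda(\ell,\varphi(gx))g\notin F_n$ and integrating in $x$, I would first apply the measure-preserving substitution $x\mapsto gx$, which removes the dependence of the argument of $\varphi$ on $gx$, and then the measure isomorphism $\varphi$ to pass from $X$ to $Y$. This rewrites the average as
\[
\frac{1}{|F_n|}\sum_{\ell\in L}\int_Y \bigl|\{g\in F_n:\lambda(\ell,y)g\notin F_n\}\bigr|\,d\nu(y),
\]
and for fixed $y$ the inner cardinality equals $|F_n\setminus\lambda(\ell,y)^{-1}F_n|\le|\lambda(\ell,y)^{-1}F_n\,\Delta\,F_n|$.

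Third, and this is the crux, I would let $n\to\infty$. For each fixed $u\in G$ the Følner property gives $|u^{-1}F_n\,\Delta\,F_n|/|F_n|\to0$, while this ratio is bounded by $2$; since for each $\ell$ the function $y\mapsto\lambda(\ell,y)$ pushes $\nu$ forward to a probability measure on the countable set $G$, dominated convergence shows that each of the finitely many summands above tends to $0$, and hence so does the whole average. Markov's inequality then yields $\mu(B_n)\to0$, so for all sufficiently large $n$ we have $\mu(B_n)\le\delta$ and may take $X_n=X\setminus B_n$. The only genuine subtlety lies in this last step: the cocycle $\lambda(\ell,\cdot)$ may assume arbitrarily large values in $G$, so no single Følner estimate applies uniformly; what rescues the argument is that these values are distributed according to a probability measure on $G$, which is precisely the hypothesis under which dominated convergence delivers the vanishing average.
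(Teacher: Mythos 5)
Your proof is correct, and it takes a genuinely different route from the paper's, though the two share a common backbone. Both arguments pull the invariance condition on $\kappa(F_n,x)\subseteq H$ back to a condition inside $G$: your equivalence $\ell\kappa(g,x)=\kappa(\lambda(\ell,\varphi(gx))g,x)$ is precisely the identity $\beta_{h\kappa(t,x)}x=\alpha_{\lambda(h,\alpha_t x)t}x$ at the heart of the paper's proof, and both use freeness to see that $g\mapsto\kappa(g,x)$ is injective (so $|\kappa(F_n,x)|=|F_n|$) and conclude with a first-moment/Markov selection of $X_n$. Where you diverge is in how the infinitely many possible values of $\lambda(\ell,\cdot)$ are tamed. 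The paper truncates: it fixes $V\subseteq X$ with $\mu(V)\geq 1-\delta^2/2$ and $\lambda(L,V)$ finite, takes $n$ large enough that $F_n$ is $(\lambda(L,V),\delta/2)$-invariant (a single F{\o}lner estimate for one fixed finite subset of $G$), controls the frequency of visits to $V$ along $F_n$-orbits by an averaging argument, and then, for each $x\in X_n$, exhibits the set of good $t$ deterministically by the counting chain at the end of its proof. You instead integrate the failure count directly, move it to $Y$ via the substitution $x\mapsto gx$ followed by $\varphi$, bound it by $\frac{1}{|F_n|}\sum_{\ell\in L}\int_Y \bigl|\lambda(\ell,y)^{-1}F_n\,\Delta\,F_n\bigr|\,d\nu(y)$, and annihilate it by dominated convergence against the pushforward of $\nu$ under $\lambda(\ell,\cdot)$, which is a probability measure on the countable group $G$ --- a step that is legitimate exactly as you describe (pointwise decay for each fixed $u\in G$, uniform bound $2$). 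Your version is leaner: no auxiliary set $V$, no $\delta^2$ bookkeeping, and the unboundedness of the cocycle is absorbed softly into DCT. What the paper's version buys in exchange is an explicit threshold for ``sufficiently large $n$'' (namely $(\lambda(L,V),\delta/2)$-invariance of $F_n$) and a deterministic description of why each $x\in X_n$ is good; indeed, the truncation to $V$ is essentially how one would prove your dominated-convergence step by hand, so the two arguments are close cousins organized around different quantifier orders rather than different ideas.
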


\begin{proof}
By conjugating $\beta$ by an orbit equivalence, we may assume that both actions are on
$(X,\mu )$ and that the identity map on $X$ is an orbit equivalence between $\alpha$ and $\beta$.
Take a set $V \subseteq X$ such that $\mu (V) \geq 1-\delta^2 /2$ and $\lambda (L,V)$ is finite.
For every $n\in\Nb$ we have
\[
\int_X \frac{1}{|F_n|} \sum_{t\in F_n} 1_{\alpha_{t^{-1}} (X\setminus V)} \, d\mu
= \frac{1}{|F_n|}  \sum_{t\in F_n} \int_X 1_{\alpha_{t^{-1}} (X\setminus V)} \, d\mu
= \mu (X\setminus V) \leq \frac{\delta^2}{2}
\]
so that there exists an $X_n\subseteq X$ with $\mu (X_n) \geq 1-\delta$
such that $|F_n|^{-1} \sum_{t\in F_n} 1_{\alpha_{t^{-1}} (X\setminus V)}$
is bounded above by $\delta /2$ on $X_n$, that is,
$|\{ t\in F_n : \alpha_t x \in V \} | \geq (1-\delta /2)|F_n |$ for all $x\in X_n$.
Now suppose that $n$ is large enough so that $F_n$ is $(\lambda (L,V),\delta /2)$-invariant.
Let $x\in X_n$. For every $h\in L$ and $t\in G$ we have
\begin{align*}
\beta_{h\kappa (t,x)} x = \beta_h \alpha_t x = \alpha_{\lambda (h,\alpha_t x)} \alpha_t x
= \alpha_{\lambda (h,\alpha_t x)t} x ,
\end{align*}
and so in the case that $\alpha_t x\in V$ we get
$\beta_{L\kappa (t,x)} x\subseteq \alpha_{\lambda (L,V)t} x$. Let $F_n'$ be the set of all
$t\in F_n$ such that $\alpha_t x \in V$. Then $|F_n' | \geq (1-\delta /2)|F_n|$, and so
\begin{align*}
| \{ h\in \kappa (F_n',x) : Lh\subseteq \kappa (F_n,x) \} |
&\geq | \{ t\in F_n' : \lambda (L,V)t\subseteq F_n \} | \\
&\geq | \{ t\in F_n : \lambda (L,V)t\subseteq F_n \} | - |F_n\setminus F_n' |\\
&\geq (1-\delta /2)|F_n| - (\delta /2)|F_n| \\
&= (1-\delta )|\kappa (F_n,x)| ,
\end{align*}
that is, $\kappa (F_n,x)$ is $(L,\delta )$-invariant.
\end{proof}

\begin{lemma}\label{L-product}
Let $G$ be a finitely generated virtually Abelian group, equipped with a word metric. Let $r,n\in\Nb$.
Let $\Omega_0 \subseteq \{ 1,\dots , n \}$ and set $\Omega_1 = \{ 1,\dots ,n \} \setminus \Omega_0$.
For every $i\in \Omega_0$ let $b_i$ be a fixed element of $G$.
Then the number of elements of $G$ of the form $b_1 \cdots b_n$
where $b_i$ is an element of the $r$-ball around $e_G$ for every $i\in \Omega_1$
is at most $e^{c_1 |\Omega_0|} c_2 (rn)^k$ where $k$ is the order of polynomial growth of $G$
and $c_1 , c_2 >0$ are constants that do not depend on $r$, $n$, or the elements
$b_i$ for $i\in \Omega_0$.
\end{lemma}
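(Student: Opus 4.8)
The plan is to reduce everything to the linear structure hiding inside $G$. Since $G$ is finitely generated and virtually Abelian it contains a normal subgroup $A\cong\Zb^k$ of finite index $m=[G:A]$, where $k$ is exactly the order of polynomial growth. I would fix coset representatives $g_1=e_G,g_2,\dots,g_m$, set $R_0=\max_j |g_j|_G$, and let $\phi\colon G/A\to\Aut(A)$ be the conjugation action $\phi_\sigma(a)=gag^{-1}$ (well defined on $G/A$ since $A$ is Abelian), viewing each $\phi_\sigma$ as a matrix in $\Aut(\Zb^k)$ with operator norm at most some constant $M=\max_\sigma \|\phi_\sigma\|$. Because $A$ has finite index, its inclusion into $G$ is a quasi-isometry, so there is a constant $D$ with $\|a\|_\infty\le D(|g|_G+1)$ whenever $g=a g_j$ is the coset decomposition of $g\in G$. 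The constants $m,k,M,D,R_0$ depend only on $G$ and its word metric. Now write each $b_i=a_i c_i$ with $a_i\in A$ and $c_i$ the representative of $b_iA$. For $i\in\Omega_0$ both $a_i$ and $c_i$ are fixed, whereas for $i\in\Omega_1$ the representative $c_i$ ranges over the $m$ choices and $a_i=b_i c_i^{-1}$ satisfies $\|a_i\|_\infty\le D(r+R_0+1)=O(r)$.

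The second step is a collection identity: by repeatedly using $c\,a=\phi_{\bar c}(a)\,c$ to push every Abelian factor to the left, one obtains (writing $A$ additively, which is legitimate as $A$ is Abelian)
\[
b_1\cdots b_n=\Big(\sum_{i=1}^n \phi_{\tau_i}(a_i)\Big)(c_1\cdots c_n),\qquad \tau_i=\overline{c_1\cdots c_{i-1}}\in G/A .
\]
Setting $u=c_1\cdots c_n$ and writing $u=v\,g_{\bar P}$ with $v\in A$ and $g_{\bar P}$ the representative of the coset $\bar P=\overline{b_1\cdots b_n}$, this rearranges to $b_1\cdots b_n=\big(v+\sum_{i=1}^n\phi_{\tau_i}(a_i)\big)g_{\bar P}$. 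Thus a product $b_1\cdots b_n$ is determined by its coset $\bar P$ (at most $m$ possibilities) together with the single lattice element $a:=v+\sum_{i=1}^n\phi_{\tau_i}(a_i)\in A\cong\Zb^k$, and it remains to bound the number of values of $a$.

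The final step is the count, and its organization is the main obstacle. The naive bound fails because the coset labels $c_i$ for $i\in\Omega_1$ range over $(G/A)^{\Omega_1}$, a set of size $m^{|\Omega_1|}$ that is exponential in $n$, whereas we are only permitted an exponential factor in $|\Omega_0|$. The point that defeats this is to split $a=\big(v+\sum_{i\in\Omega_1}\phi_{\tau_i}(a_i)\big)+\sum_{i\in\Omega_0}\phi_{\tau_i}(a_i)$ and observe that the first bracket lies in one fixed box independent of the pattern of labels: since $|u|_G\le nR_0$ we get $\|v\|_\infty=O(n)$, and each $\|\phi_{\tau_i}(a_i)\|_\infty\le M\|a_i\|_\infty=O(r)$ for $i\in\Omega_1$, so the first bracket has $\ell^\infty$-norm $O(rn)$ no matter how the $c_i$ are chosen, hence takes at most $O((rn)^k)$ values. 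By contrast the second sum involves the \emph{fixed} $a_i$, merely twisted by $\tau_i$, so its number of values is bounded by the number of tuples $(\tau_i)_{i\in\Omega_0}\in(G/A)^{\Omega_0}$, namely $m^{|\Omega_0|}$. Multiplying the at most $m$ cosets, the $O((rn)^k)$ first-bracket values, and the $m^{|\Omega_0|}$ second-sum values gives a total of at most $m^{\,|\Omega_0|+1}\cdot C(rn)^k$ distinct products, which is of the asserted form with $c_1=\log m$ and $c_2$ a constant depending only on $G$ and the word metric. (If the chosen $A$ carries torsion, i.e.\ $A\cong\Zb^k\times T$ with $T$ finite, the torsion coordinate contributes only a further bounded factor $|T|$, absorbed into $c_2$.)
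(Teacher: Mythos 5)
Your proof is correct and takes essentially the same route as the paper's: after passing to a finite-index normal Abelian subgroup, you collect the conjugated Abelian factors to the left (your $\sum_i \phi_{\tau_i}(a_i)$ is exactly the paper's $\prod_i h_i a_i h_i^{-1}$), count the $\Omega_0$ contribution via the finitely many conjugation automorphisms induced by $G$ on $A$ (your $m^{|\Omega_0|}$ is the paper's $|E|^{|\Omega_0|}$), and bound the $\Omega_1$ contribution plus the coset tail using polynomial growth of order $k$. The only cosmetic difference is that you coordinatize $A\cong\Zb^k$ and count lattice points in an $\ell^\infty$-box of side $O(rn)$, whereas the paper stays intrinsic to $G$ and counts points in the ball $B(((r+1)d+1)n)$ via $|B(m)|\le Cm^k$; both yield the same $c_2(rn)^k$ factor.
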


\begin{proof}
By hypothesis $G$ has an Abelian subgroup $A$ of finite index, which we may assume to be normal
by replacing it with the intersection of all of its conjugates, which is again of finite index.
Since finite-index subgroups of finitely generated groups are also finitely generated, the subgroup $A$
is finitely generated. Fix a finite symmetric generating set $S$ for $A$, and
choose a set $\{ g_1 , \dots , g_l \}$ of representatives for the cosets of $A$ with $g_1 = e_G$.
Set $F = \{ g_2 , \dots , g_l , g_2^{-1} , \dots , g_l^{-1} \}$.
Since any two word metrics on $G$ are bi-Lipschitz equivalent, we may assume
that the given word metric is with respect to the symmetric generating set $S \cup F$.
Write $B(m)$ for the ball of radius $m$ around $e_G$.
Since $G$ has polynomial growth of order $k$, there exists a $C>0$ such that
$|B(m)| \leq Cm^k$ for every $m\in\Nb$.
Since $A$ has finite index in $G$, the set of all
automorphisms of $A$ of the form $a\mapsto gag^{-1}$ for some $g\in G$ is finite,
and so there is a finite set $E\subseteq G$ such that each of these automorphisms
has the form $a\mapsto gag^{-1}$ for some $g\in E$.
Set $K = \{ e_G \} \cup \bigcup_{g\in E} g(S\cup F)g^{-1}$, which is a finite subset of $G$, so that there
exists a $d\in\Nb$ for which $K\subseteq B(d)$.

Suppose we are given $b_i \in B(r)$ for $i\in \Omega_1$. For each $i=1,\dots ,n$ we can write
$b_i = a_i g_{l_i}$ for some $a_i \in A$ and $1\leq l_i \leq l$.
Using that elements of $A$ commute, and writing $h_i = g_{l_1} \cdots g_{l_{i-1}}$ for $1 < i\leq n$ and $h_1 = e_G$,
we have
\begin{align*}
b_1 \cdots b_n
&= a_1 g_{l_1} \cdots a_n g_{l_n} \\
&= \bigg( \prod_{i=1}^n h_i a_i h_i^{-1} \bigg)
g_{l_1} \cdots g_{l_n} \\
&= \bigg( \prod_{i\in\Omega_0} h_i a_i h_i^{-1} \bigg)
\bigg( \prod_{i\in\Omega_1} h_i a_i h_i^{-1} \bigg)
g_{l_1} \cdots g_{l_n} .
\end{align*}
Since $a_i$ is fixed for every $i\in\Omega_0$, there are at most $|E|^{|\Omega_0 |}$
possibilities for $\prod_{i\in\Omega_0} h_i a_i h_i^{-1}$.
Now for every $a\in B(r+1) \cap A$ and $h\in G$ the element $hah^{-1}$ is equal to $gag^{-1}$
for some $g\in E$ and thus belongs to $K^{r+1}$, which is contained in $B((r+1)d)$.
Since for every $i\in \Omega_1$ we have $a_i = b_i g_{l_i}^{-1} \in B(r+1)$, it follows that
$\prod_{i\in\Omega_1} h_i a_i h_i^{-1}\in B((r+1)d |\Omega_1 |) \subseteq B((r+1)d n)$,
and so the product $(\prod_{i\in\Omega_1} h_i a_i h_i^{-1} )g_{l_1} \cdots g_{l_n}$
lies in $B((r+1)d +1)n)$, which has cardinality at most $C((r+1)d +1)n)^k$.
Therefore the total number of possibilities for $b_1 \cdots b_n$ is bounded above by
$|E|^{|\Omega_0 |}C((r+1)d +1)n)^k$.
We can thus take $c_1 = \log |E|$ and $c_2 = C(3d)^k$.
\end{proof}

Let $G$ be an infinite virtually cyclic group and $H$ a countably infinite group.
Let $G\overset{\alpha}\curvearrowright (X, \mu)$ and $H\overset{\beta}\curvearrowright (X, \mu)$
be free ergodic p.m.p.\ actions such that the identity map on $X$ is an orbit equivalence between the
actions and the cocycle $\kappa: G\times X\rightarrow H$ is Shannon.
For each $g\in G$, denote by $\sQ_g$ the countable partition of $X$ consisting of
the sets $\{x\in X: \kappa(g, x)=h\}$ for $h\in H$. Let $a\in G$ be such that
$\left<a\right>$ is a finite-index normal subgroup of $G$.
Such an element always exists since the intersection of the conjugates of any finite-index cyclic subgroup of $G$
is a normal cyclic subgroup of finite index.
For all $n, m\in \Nb$
the join $\bigvee_{j=0}^{n-1}a^{-jm}\sQ_{a^m}$ refines $\sQ_{a^{nm}}$, and so
\[
\frac{1}{|G: \left<a^m\right>|}H(\sQ_{a^m})\ge \frac{1}{|G: \left<a^{nm}\right>|}
H\bigg(\bigvee_{j=0}^{n-1}a^{-jm}\sQ_{a^m}\bigg)\ge \frac{1}{|G: \left<a^{nm}\right>|}H(\sQ_{a^{nm}}).
\]
Since $\sQ_{a^{-m}}=a^m\sQ_{a^m}$ one also has
\[
\frac{1}{|G: \left<a^{-m}\right>|}H(\sQ_{a^{-m}})
=\frac{1}{|G: \left<a^m\right>|}H(a^m\sQ_{a^m})=\frac{1}{|G: \left<a^m\right>|}H(\sQ_{a^m}).
\]
Since any two finite-index subgroups of an infinite group have nontrivial intersection,
it follows that $\inf_{m\in \Nb}\frac{1}{|G: \left<a^m\right>|}H(\sQ_{a^m})$
is equal to the infimum of $\frac{1}{|G: \left<g\right>|}H(\sQ_g)$
over all generators $g$ of finite-index normal subgroups of $G$ and hence is a numerical invariant
of the orbit equivalence between $\alpha$ and $\beta$. We denote this quantity by $h(\kappa)$.

\begin{lemma}\label{L-zero}
If in the above setting $H$ is virtually Abelian and finitely generated, then $h(\kappa)=0$.
\end{lemma}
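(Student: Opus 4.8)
The plan is to use the cocycle identity to write $\kappa(a^m,\cdot)$ as a length-$m$ product of the generator values in $H$, and then to bound $H(\sQ_{a^m})$ by separating the \emph{heavy} (large) factors, whose total contribution is controlled by the Shannon condition $H(\sQ_a)<\infty$, from the \emph{light} (bounded) factors, whose ordered products range over a set of only polynomial size by Lemma~\ref{L-product}. Since $\langle a\rangle$ has finite index $d$ in $G$ and $a$ has infinite order, $|G:\langle a^m\rangle|=dm$, so $h(\kappa)=\inf_m\frac{1}{dm}H(\sQ_{a^m})$ and it suffices to show that $\frac{1}{m}H(\sQ_{a^m})$ can be made arbitrarily small by a suitable choice of $m$.

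First I record the product formula. Writing $\phi(x)=\kappa(a,x)$, the cocycle identity gives $\kappa(a^m,x)=\phi(a^{m-1}x)\cdots\phi(ax)\phi(x)$, so $\sQ_{a^m}$ is refined by $\bigvee_{j=0}^{m-1}a^{-j}\sQ_a$. Fix a word metric on $H$ and for $r\in\Nb$ let $B_H(r)$ be the $r$-ball around $e_H$ (a finite set, as $H$ is finitely generated). Put $p_r=\mu\{x:\phi(x)\notin B_H(r)\}$ and let $\tau_r=\sum_{h\notin B_H(r)}-\mu(\{x:\phi(x)=h\})\log\mu(\{x:\phi(x)=h\})$ be the corresponding tail of $H(\sQ_a)$. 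Since $\kappa$ is Shannon we have $H(\sQ_a)<\infty$, whence $p_r\to 0$ and $\tau_r\to 0$ as $r\to\infty$.

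The core is the following conditioning. Let $\sP$ be the partition of $X$ whose atoms are the single set $\{x:\phi(x)\in B_H(r)\}$ together with the sets $\{x:\phi(x)=h\}$ for $h\notin B_H(r)$, and put $\sS=\bigvee_{j=0}^{m-1}a^{-j}\sP$; thus an atom of $\sS$ records which positions $0,\dots,m-1$ are heavy (have $\phi(a^j x)\notin B_H(r)$) and the exact value of $\phi(a^j x)$ at each heavy position. By invariance of $\mu$ and subadditivity, $H(\sS)\le mH(\sP)=m\big(-(1-p_r)\log(1-p_r)+\tau_r\big)=:m\beta_r$, where $\beta_r\to 0$ as $r\to\infty$. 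On a fixed atom of $\sS$ the heavy factors of $\kappa(a^m,x)=\phi(a^{m-1}x)\cdots\phi(x)$ are prescribed while the remaining factors range over $B_H(r)$; applying Lemma~\ref{L-product} to $H$ (with $n=m$, with $\Omega_0$ the heavy positions and the prescribed heavy values as the fixed elements, and $\Omega_1$ its complement) bounds the number of resulting products by $e^{c_1N}c_2(rm)^k$, where $N$ is the number of heavy positions on that atom and $k$ is the order of polynomial growth of $H$. Since $\sum_{j=0}^{m-1}\mu\{x:\phi(a^j x)\notin B_H(r)\}=mp_r$, integrating $\log$ of this count over the atoms of $\sS$ gives
\[
H(\sQ_{a^m}\mid\sS)\le c_1mp_r+\log c_2+k\log(rm).
\]

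Combining these via $H(\sQ_{a^m})\le H(\sS)+H(\sQ_{a^m}\mid\sS)$ yields
\[
\frac{1}{m}H(\sQ_{a^m})\le\beta_r+c_1p_r+\frac{\log c_2+k\log(rm)}{m}.
\]
Given $\eps>0$, I first choose $r$ so large that $\beta_r+c_1p_r<\eps/2$, and then $m$ so large that $(\log c_2+k\log(rm))/m<\eps/2$; for this $m$ we get $\frac{1}{|G:\langle a^m\rangle|}H(\sQ_{a^m})\le\frac{1}{m}H(\sQ_{a^m})<\eps$, so $h(\kappa)=0$. The main obstacle is precisely the conditional estimate: one must set up $\sS$ so that the unbounded heavy values are fully absorbed into it—where their cost is the tail $\beta_r$ of the \emph{finite} quantity $H(\sQ_a)$—while the residual product, having only a $p_r$-fraction of unconstrained factors, takes merely polynomially many values by virtue of the polynomial growth of the finitely generated virtually Abelian group $H$. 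The hypotheses on $H$ enter exactly here, and the order of quantifiers (choose $r$, then $m$) is what drives the rate to zero.
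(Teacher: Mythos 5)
Your proof is correct, and it shares the paper's overall architecture --- the same heavy/light splitting of $\sQ_a$ into a finite part and a small-entropy tail, and the same atom-by-atom appeal to Lemma~\ref{L-product} with $\Omega_0$ the set of heavy positions --- but it diverges at the step where the frequency of heavy positions is controlled, and your handling there is genuinely simpler. The paper invokes the mean ergodic theorem to produce, for large $n$, a collection $\sD_n'$ of atoms of total measure $>1-\delta$ on which the number of heavy positions is at most $\eps n$; the exceptional atoms must then be treated separately via the crude bound $\log |\sQ_a'|^n$, which forces the auxiliary parameter $\delta$ with $\delta\log|\sQ_a'|<\eps$ and the attendant bookkeeping with $\sC_n$, $\sC_n'$, $\sD_n'$. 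You avoid all of this by observing that, since the per-atom count $e^{c_1 N}c_2(rm)^k$ depends on the number $N$ of heavy positions only through the exponential factor, taking the logarithm inside the conditional entropy makes the dependence \emph{linear} in $N$, so one may simply average: $\sum_A \mu(A)N_A = \sum_{j=0}^{m-1}\mu\{x:\phi(a^jx)\notin B_H(r)\} = mp_r$ by invariance of $\mu$, yielding $H(\sQ_{a^m}\mid\sS)\le c_1 m p_r + \log c_2 + k\log(rm)$ with no exceptional set whatsoever. This trades the paper's almost-uniform control (valid only off a small set of atoms) for control in the mean, which is all the entropy estimate actually needs, and the resulting bound $\frac{1}{m}H(\sQ_{a^m})\le \beta_r + c_1 p_r + (\log c_2 + k\log(rm))/m$ is cleaner and fully quantitative; your reduction $|G:\langle a^m\rangle|=dm$ is also fine since $a$ necessarily has infinite order. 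The one point requiring care --- and you handle it correctly --- is that the constants $c_1, c_2$ of Lemma~\ref{L-product} must not depend on the prescribed heavy values, which are unbounded in $H$; the lemma guarantees exactly this independence, and it is there (and only there) that the hypotheses of finite generation and virtual Abelianness of $H$ enter, just as in the paper.
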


\begin{proof}
Take an $a\in G$ that generates a finite-index normal subgroup.
Then we have $h(\kappa)=\inf_{n\in \Nb}\frac{1}{|G:\left<a\right>|n}H(\sQ_{a^n})$.
Choose a set $R$ of coset representatives for $\left<a\right>$ with $e_G\in R$.
For each $n\in \Nb$, put $I_n=\{e_G, a, \dots, a^{n-1}\}$ and $F_n=I_nR$.
Let $\varepsilon>0$.

Take a finite subset $\sQ_a'$ of $\sQ_a$ such that setting
$C=X\setminus \bigcup \sQ_a'$ and $\sD = (\sQ_a\setminus \sQ_a')\cup \{X\setminus C\}$ one has
$\mu(C)<\varepsilon/(2|R|)$ and $H(\sD)<\varepsilon$. Set $\sC=\{C, X\setminus C\}$.

Note that each member of $\sD$ is the union of at most $|\sQ_a'|$ many members of $\sQ_a$.
Let $n\in \Nb$ and let $D$ be a nonnull member of $\sD^{I_n}$.
Then $D$ is the union of at most $|\sQ_a'|^n$ many members of $\sQ_a^{I_n}$. Thus
\begin{gather}\label{E-Qa}
\sum_{B\in \sQ_a^{I_n}, \,B\subseteq D}-\frac{\mu(B)}{\mu(D)}\log \frac{\mu(B)}{\mu(D)}\le \log |\sQ_a'|^n.
\end{gather}

Take $0<\delta<\varepsilon$ such that $\delta\log |\sQ_a'|<\varepsilon$. By the mean ergodic theorem,
when $n\in \Nb$ is sufficiently large there is a collection $\sC_n\subseteq \sC^{F_n}$ such that
$\mu(\bigcup \sC_n)>1-\delta$ and for each $A\in \sC_n$ one has
$\frac{1}{|F_n|}\sum_{g\in F_n}1_C(gx)\le \mu(C)+\varepsilon/(2|R|)$ for all $x\in A$. Denote by $\sC_n'$ the set
of all $B\in \sC^{I_n}$ containing some nonnull $A\in \sC_n$.
Then $\mu(\bigcup \sC_n')\ge \mu(\bigcup \sC_n)\ge 1-\delta$. For each $B\in \sC_n'$ and $x\in B$,
taking an $A\in \sC_n$ with $A\subseteq B$ and a $y\in A$ one has
\begin{align}\label{E-eps}
\frac{1}{|I_n|}\sum_{g\in I_n}1_C(gx)
&=\frac{1}{|I_n|}\sum_{g\in I_n}1_C(gy)\\
&\le \frac{1}{|I_n|}\sum_{g\in F_n}1_C(gy)\notag \\
&=\frac{|R|}{|F_n|}\sum_{g\in F_n}1_C(gy)\notag \\
&\le |R|\bigg(\mu(C)+\frac{\varepsilon}{2|R|}\bigg)\notag \\
&\le \varepsilon.\notag
\end{align}
Denote by $\sD_n'$ the collection of members of $\sD^{I_n}$ contained in some member of $\sC_n'$.
Then $\mu(\bigcup \sD_n')=\mu(\bigcup \sC_n')>1-\delta$, and therefore, using (\ref{E-Qa}),
\begin{align}\label{E-ne}
\sum_{D\in \sD^{I_n}\setminus \sD_n'}\,\sum_{B\in \sQ_a^{I_n},\, B\subseteq D}-\mu(D)\cdot \frac{\mu(B)}{\mu(D)}\log \frac{\mu(B)}{\mu(D)}
&\le \sum_{D\in \sD^{I_n}\setminus \sD_n'}\mu(D)\log |\sQ_a'|^n \\
&\le n\delta \log |\sQ_a'| \notag \\
&<n\varepsilon. \notag
\end{align}
Write $k$ for the order of polynomial growth of $H$ and let $r\in\Nb$ be
such that the set
$\{ \kappa (a,x) : x\in \bigcup\sQ_a' \}$ is contained
in the $r$-ball around $e_H$ with respect to some fixed word metric on $H$.
Then by Lemma~\ref{L-product} there are $c_1 , c_2 > 0$
not depending on $n$
such that every $D\in \sD_n'$ intersects at most $e^{c_1 |\Omega_D|}c_2 (rn)^k$ many members
of $\sQ_{a^n}$ where $\Omega_D=\{g\in I_n: gx\in C\}$ for $x\in D$, and for such a $D$ we have
$|\Omega_D|\le \varepsilon n$ by (\ref{E-eps}) and hence
\begin{gather}\label{E-log}
\sum_{B\in \sQ_{a^n}\vee \sD^{I_n},\, B\subseteq D}-\frac{\mu(B)}{\mu(D)}\log \frac{\mu(B)}{\mu(D)}
\le \log (e^{\varepsilon c_1 n} c_2 (rn)^k ).
\end{gather}
Denote by $\sP_n$ the partition of $X$ consisting of the members of $\sQ_a^{I_n}$ contained in
$\bigcup (\sD^{I_n}\setminus \sD_n')$ along with the members of $\sQ_{a^n}\vee \sD^{I_n}$
contained in $\bigcup \sD_n'$. Then $\sP_n$ refines both $\sQ_{a^n}$ and $\sD^{I_n}$,
and since $H(\sD^{I_n} ) \leq |I_n| H(\sD ) < n\eps$ we therefore obtain, using (\ref{E-ne}) and (\ref{E-log}),
\begin{align*}
H(\sQ_{a^n})&\le H(\sP_n)\\
&= H(\sD^{I_n})+H(\sP_n|\sD^{I_n})\\
&\le n\varepsilon+\sum_{D\in \sD^{I_n}\setminus \sD_n'}\,\sum_{B\in \sQ_a^{I_n},\, B\subseteq D}-\mu(D)\cdot \frac{\mu(B)}{\mu(D)}\log \frac{\mu(B)}{\mu(D)}\\
&\hspace*{20mm} \ +\sum_{D\in \sD_n'}\,\sum_{B\in \sQ_{a^n}\vee \sD^{I_n},\, B\subseteq D}-\mu(D)\cdot \frac{\mu(B)}{\mu(D)}\log \frac{\mu(B)}{\mu(D)}\\
&\le n\varepsilon+n\varepsilon+\sum_{D\in \sD_n'}\mu(D)\log (e^{\varepsilon c_1 n} c_2 (rn)^k )\\
&\le n\varepsilon (2+c_1 )+\log (c_2 (rn)^k).
\end{align*}
As none of $r$, $c_1$, $c_2$, and $k$ depend on $n$, this yields
\[
h(\kappa)=\inf_{n\in \Nb}\frac{1}{|G:\left<a\right>|n}H(\sQ_{a^n})\le \frac{1}{|G:\left<a\right>|}\varepsilon(2+c_1).
\]
Letting $\varepsilon\to 0$ we obtain $h(\kappa)=0$.
\end{proof}

\begin{question}
Is there any way to extend the definition of $h(\kappa)$ to other amenable groups $G$?
\end{question}

Let $G\stackrel{\alpha}{\curvearrowright} (X,\mu )$
be a free ergodic p.m.p.\ action of a countably infinite amenable group.
A finite partition $\sP$ of $X$ is said to be {\it $\alpha$-uniform} if the convergence in the
pointwise ergodic theorem applied to each of the indicator functions of members of $\sP$ is uniform off of a null set,
i.e., one obtains a uniquely ergodic subshift action of $G$ by using $\sP$ in the obvious way to define a
$G$-equivariant map into the shift over $G$ with symbol set $\sP$ and
then taking the closure of the image of some $G$-invariant conull subset of $X$.
In this topological model the unique invariant Borel probability measure defines a p.m.p.\ action that is
measure conjugate to the quotient of $\alpha$ determined by the invariant sub-$\sigma$-algebra generated by $\sP$,
and $\sP$ becomes, modulo null sets, a clopen generating partition. We can
then apply the variational principle and generator theorems to conclude that the
measure entropy $h(\alpha ,\sP )$ is equal to the topological entropy of $\sP$ as a clopen partition,
which can be expressed as $\lim_{n\to\infty} \frac{1}{|F_n |} \log |\sP^{F_n} |$
for any F{\o}lner sequence $\{ F_n \}$ for $G$.

\begin{lemma}\label{L-control}
Let $G$ be an infinite virtually cyclic group and $H$ a countably infinite amenable group.
Let $G\stackrel{\alpha}{\curvearrowright} (X,\mu )$ and $H\stackrel{\beta}{\curvearrowright} (X,\mu )$
be free ergodic p.m.p.\ actions such that the identity map on $X$
is an orbit equivalence for which the cocycle $\kappa : G\times X\to H$ is Shannon. Then
\[
h(\beta)+h(\kappa)\ge h(\alpha).
\]
\end{lemma}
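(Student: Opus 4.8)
The plan is to bound $h(\alpha ,\sP )$ for an arbitrary finite partition $\sP$ of $X$ by $h(\beta )+h(\kappa )$ and then take the supremum over $\sP$. The first move is to invoke the Jewett--Krieger machinery described just before the lemma: after replacing $\alpha$ by a uniquely ergodic topological model I may assume $\sP$ is $\alpha$-uniform, so that $h(\alpha ,\sP )=\lim_n \frac{1}{|F_n|}\log |\sP^{F_n}|$ with the \emph{cardinality} of the join in place of its Shannon entropy. This is what licenses a counting argument on $\alpha$-names. Fix $\eps >0$ and use the Følner sequence coming from the virtually cyclic structure of $G$: choose $a\in G$ generating a finite-index normal subgroup, a transversal $R$ for $\langle a\rangle$ with $e_G\in R$, and set $I_n=\{ e_G,a,\dots ,a^{n-1}\}$ and $F_n=I_nR$, so that $|F_n|=|R|\,n$.

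The geometric heart is that the identity orbit equivalence turns an $\alpha$-name into a reparametrized $\beta$-name. For a.e.\ $x$ the map $g\mapsto \kappa (g,x)$ is injective on $F_n$ by freeness of $\beta$, carrying $F_n$ bijectively onto $E_n(x):=\kappa (F_n,x)\subseteq H$, and since $\alpha_g x=\beta_{\kappa (g,x)}x$ the $\sP$-name of $x$ along $F_n$ under $\alpha$ is exactly the $\sP$-name of $x$ along $E_n(x)$ under $\beta$, read through this bijection. Thus the atom of $\sP^{F_n}$ containing $x$ is determined by the cocycle data $g\mapsto \kappa (g,x)$ (equivalently the atom of $\bigvee_{g\in F_n}\sQ_g$) together with the atom of $\sP^{E_n(x)}$ for $\beta$. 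To make the $\beta$-side uniform in $x$ I call on Lemma~\ref{L-invt}: choosing, via the Ornstein--Weiss uniform convergence of $\frac{1}{|E|}H(\sP^E)$ along invariant sets, a finite $L\subseteq H$ and $\delta >0$ for which $(L,\delta )$-invariance of $E$ forces $\frac{1}{|E|}H(\sP^E)<h(\beta ,\sP )+\eps$, the lemma supplies for all large $n$ a set of measure at least $1-\delta$ on which $E_n(x)$ is $(L,\delta )$-invariant. Since $|E_n(x)|=|F_n|$, on this good set the $\beta$-$\sP$-names contribute at most $|F_n|(h(\beta ,\sP )+\eps )$ to the exponential rate.

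The remaining and decisive contribution is the cocycle data, and here is where I expect the real work to lie. Recording the full reparametrization $g\mapsto \kappa (g,x)$ is equivalent to recording the entire single-step cocycle path $(\kappa (a^j r,x))$, whose rate is the full cocycle entropy $h(\alpha ,\sQ_a)$ rather than the cumulative quantity $h(\kappa )=\inf_m \frac{1}{|R|m}H(\sQ_{a^m})$; the gap between the two is precisely the cancellation inside the products defining $\kappa (a^m,\cdot )$. The plan is therefore to record the cocycle only at a sparse, highly separated set of checkpoints along $I_n$, so that what is paid for per unit of $F_n$ is the \emph{cumulative} displacement $\kappa (a^m,\cdot )$ across a block rather than every increment, and to let the $\beta$-$\sP$-data already being recorded on the sets $E_n(x)$ fill in the fine structure between checkpoints. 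Combining the cumulative bound $\frac{1}{|R|m}H(\sQ_{a^m})\to h(\kappa )$ with the $(L,\delta )$-invariance of the ranges $E_n(x)$ and the ``almost linear'' structure of $G$, which keeps each block's image in $H$ within a controlled region around its checkpoint, should yield $\frac{1}{|F_n|}\log |\sP^{F_n}|\le h(\beta ,\sP )+h(\kappa )+C\eps$; letting $\eps \to 0$ and then taking $\sup_\sP$ finishes the proof.

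I expect the main obstacle to be exactly this separation of scales: arranging the checkpoint/block decomposition so that the cocycle is charged only at the cumulative rate $h(\kappa )$ while the intervening $\sP$-labels are counted from the $\beta$-data without double counting. The delicate point is that \emph{exact} reconstruction of the $\alpha$-name between checkpoints appears to require the full single-step cocycle, so instead of reconstructing one must bound the number of compatible $\alpha$-names using the invariance from Lemma~\ref{L-invt} together with the cumulative entropy of $\sQ_{a^m}$, keeping the cross-term between the coarse (checkpoint) and fine ($\beta$) scales within $O(\eps )|F_n|$. Controlling this cross-term, rather than the individual contributions of $h(\beta )$ and $h(\kappa )$, is where I anticipate the bulk of the effort.
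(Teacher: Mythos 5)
Your overall architecture matches the paper's in outline --- F{\o}lner sets built from a generator $a$ of a finite-index normal subgroup and a transversal $R$, Lemma~\ref{L-invt} to make the ranges $\kappa (F_n ,x)$ $(K,\delta )$-invariant, and the observation that once the cocycle data along $I_n$ is fixed the $\alpha$-names along $F_n$ are read off from $\beta$-names --- but there are two genuine gaps. First, you apply Jewett--Krieger on the wrong side. Making $\sP$ $\alpha$-uniform is unnecessary: on the $\alpha$ side one only needs names to be \emph{plentiful}, and Shannon entropy already bounds log-cardinality from below (the paper gets $H(\sP^{F_n}\vee \sQ_a^{I_n}|\sQ_a^{I_n})\geq |F_n|h(\alpha ,\sP )-|I_n|H(\sQ_a)$ for every $n$ from the infimum formula, and then $\log |\sS_n|$ dominates the conditional entropy on a well-chosen atom $C$ of $\sQ_a^{I_n}$). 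Where a counting bound is indispensable is on the $\beta$ side: you must bound the \emph{number} of nonnull atoms of $\sP^{RL}$ by $\exp (|L|(h(\beta ,\sP^R)+\eps ))$ for $(K,\delta )$-invariant $L$. Your proposed substitute --- Ornstein--Weiss convergence of $\frac{1}{|E|}H(\sP^E)$ --- does not deliver this: small Shannon entropy is compatible with exponentially many atoms of tiny measure, and moreover the sets $E_n(x)=\kappa (F_n,x)$ are not members of any fixed F{\o}lner sequence, so no Shannon--McMillan-type statement along them is available anyway. This is exactly why the paper invokes Jewett--Krieger for $\beta$ (together with continuity of $\sQ \mapsto h(\alpha ,\sQ )$ in the Rokhlin metric, to reduce to $\beta$-uniform $\sP$): for a clopen partition of a uniquely ergodic model, the counting function $L\mapsto \log |\sP^{RL}|$ is controlled at the rate $h(\beta ,\sP^R)$ along all sufficiently invariant finite sets.

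Second, the core of your plan --- the checkpoint/block sparsification meant to charge the cocycle only at the cumulative rate $h(\kappa )$ --- is precisely the part you leave unproven, and it is not how the paper proceeds. The paper never sparsifies: it conditions on the \emph{full} single-step cocycle partition $\sQ_a^{I_n}$, on whose atoms the entire map $g\mapsto \kappa (g,x)$, $g\in I_n$, is constant by the cocycle identity, pays the cost $H(\sQ_a^{I_n})\leq nH(\sQ_a)$, i.e.\ $\frac{1}{|R|}H(\sQ_a)$ per element of $F_n$, and concludes $h(\beta )\geq h(\alpha ,\sP )-\frac{1}{|G:\langle a\rangle |}H(\sQ_a)$. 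The passage from this to $h(\kappa )$ is then a one-line reduction that your proposal misses: $h(\kappa )$ is by definition the infimum of $\frac{1}{|G:\langle a^m\rangle |}H(\sQ_{a^m})$ over $m$ (equivalently, over all generators of finite-index normal subgroups), so one simply runs the same argument with $a^m$ in place of $a$ and takes the infimum. With that reduction, the ``separation of scales'' difficulty you flag as the bulk of the remaining effort --- controlling the cross-term between checkpoint data and the intervening $\beta$-names --- never arises at all.
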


\begin{proof}
Take an $a\in G$ which generates a finite-index normal subgroup of $G$.
It suffices to show that $h(\beta)+\frac{1}{|G: \left<a\right>|}H(\sQ_a)\ge h(\alpha)$.
By the Jewett--Krieger theorem \cite{Jew69,Kri72,Wei85,Ros87},
every finite partition of $X$ can be approximated arbitrarily well in the Rokhlin metric
$d(\sP , \sQ ) = H(\sP | \sQ ) + H(\sQ | \sP )$
by $\beta$-uniform finite partitions. Since the function $\sQ \mapsto h(\alpha , \sQ )$
on finite partitions is continuous with respect to this metric, it is therefore enough to show,
given a $\beta$-uniform partition $\sP$, that
$h(\beta)+\frac{1}{|G: \left<a\right>|}H(\sQ_a)\ge h(\alpha, \sP)$.

Choose a set $R$ of coset representatives for $\left<a\right>$ with $e_G\in R$.
For each $n\in \Nb$ set $I_n=\{e_G, a, \dots, a^{n-1}\}$ and $F_n=RI_n$.

Let $0<\varepsilon<1/8$. Since $\sP$ is $\beta$-uniform, so is $\sP^R$. Thus there are some nonempty finite
set $K\subseteq H$ and $\delta>0$ such that for any nonempty $(K, \delta)$-invariant finite set
$L\subseteq H$ one has $h(\beta, \sP^R)+\varepsilon \ge \frac{1}{|L|}\log |\sP^{RL}|$
(recall our convention that we do not count null sets when taking the cardinality of a partition).

For each $n\in \Nb$ we have
\[
H(\sP^{F_n}\vee \sQ_a^{I_n})\ge H(\sP^{F_n})\ge |F_n|h(\alpha, \sP)
\]
and
\[
H(\sQ_a^{I_n})\le |I_n|H(\sQ_a) ,
\]
so that
\begin{gather}\label{E-PFn1}
H(\sP^{F_n}\vee \sQ_a^{I_n}|\sQ_a^{I_n})=H(\sP^{F_n}\vee \sQ_a^{I_n})-H(\sQ_a^{I_n})\ge |F_n|h(\alpha, \sP)-|I_n|H(\sQ_a).
\end{gather}
For each nonnull $C\in \sQ_a^{I_n}$, one has
\begin{gather}\label{E-PFn2}
\sum_{B\in \sP^{F_n}\vee\sQ_a^{I_n},\, B\subseteq C}-\frac{\mu(B)}{\mu(C)}\log  \frac{\mu(B)}{\mu(C)}\le \log |\sP|^{|F_n|}=|F_n|\log |\sP|.
\end{gather}

By Lemma~\ref{L-invt},
when $n\in \Nb$ is sufficiently large there is an $X_n\subseteq X$ such that $\mu(X_n)>1-\varepsilon$ and
for each $x\in X_n$ the set $\kappa(F_n, x)$ is $(K, \delta)$-invariant. Denote by $\sC_n$ the collection of
all $C\in \sQ_a^{I_n}$ satisfying $\mu(C\cap X_n)>0$.
Then $\bigcup (\sQ_a^{I_n}\setminus \sC_n)\le \mu(X\setminus X_n)<\varepsilon$. Using (\ref{E-PFn2}) we obtain
\begin{align*}
\lefteqn{\sum_{C\in \sQ_a^{I_n}\setminus \sC_n} \mu(C)\sum_{B\in \sP^{F_n}\vee\sQ_a^{I_n},\, B\subseteq C}-\frac{\mu(B)}{\mu(C)}\log  \frac{\mu(B)}{\mu(C)}} \hspace*{35mm} \\
\hspace*{35mm} &\le \sum_{C\in \sQ_a^{I_n}\setminus \sC_n} \mu(C)|F_n|\log |\sP|\le \varepsilon |F_n|\log |\sP|
\end{align*}
and hence, combining with (\ref{E-PFn1}),
\begin{align*}
\lefteqn{\sum_{C\in \sC_n} \mu(C)\sum_{B\in \sP^{F_n}\vee\sQ_a^{I_n},\, B\subseteq C}-\frac{\mu(B)}{\mu(C)}\log  \frac{\mu(B)}{\mu(C)}} \hspace*{10mm} \\
\hspace*{10mm} &=H(\sP^{F_n}\vee \sQ_a^{I_n}|\sQ_a^{I_n})-\sum_{C\in \sQ_a^{I_n}\setminus \sC_n} \mu(C)\sum_{B\in \sP^{F_n}\vee\sQ_a^{I_n},\, B\subseteq C}-\frac{\mu(B)}{\mu(C)}\log  \frac{\mu(B)}{\mu(C)}\\
&\ge |F_n|h(\alpha, \sP)-|I_n|H(\sQ_a)-\varepsilon |F_n|\log |\sP|.
\end{align*}
It follows that there is some $C\in \sC_n$ such that
\[
\sum_{B\in \sP^{F_n}\vee\sQ_a^{I_n},\, B\subseteq C}-\frac{\mu(B)}{\mu(C)}\log  \frac{\mu(B)}{\mu(C)}\ge |F_n|h(\alpha, \sP)-|I_n|H(\sQ_a)-\varepsilon |F_n|\log |\sP|.
\]
Denoting by $\sS_n$ the set of all $A\in \sP^{F_n}$ satisfying $\mu(A\cap C)>0$, we then have
\begin{align}\label{E-Sn}
\log |\sS_n| &\ge \sum_{B\in \sP^{F_n}\vee\sQ_a^{I_n},\, B\subseteq C}-\frac{\mu(B)}{\mu(C)}\log  \frac{\mu(B)}{\mu(C)} \\
&\ge |F_n|h(\alpha, \sP)-|I_n|H(\sQ_a)-\varepsilon |F_n|\log |\sP|. \notag
\end{align}

For each $a^k\in I_n$ note that
\[
\kappa(a^k, x)=\kappa(a, a^{k-1}x)\kappa(a, a^{k-2}x)\cdots \kappa(a, x)
\]
is the same for all $x\in C$. Thus the map $g\mapsto \kappa(g, x)$ from $I_n$ to $H$ is the same for all $x\in C$.
Denote the image of this map by $L_0$. Choose an $x_0\in X_n\cap C$ and put $L=\kappa(F_n, x_0)$.
Then $L$ is $(K, \delta)$-invariant and $L\supseteq L_0$.

For each $A\in \sS_n$ one has $A\cap C=B_A\cap  C$ for a unique $B_A\in \sP^{RL_0}$. Thus
\[
|\sP^{RL}|\ge |\sP^{RL_0}|\ge |\sS_n|
\]
and hence, using (\ref{E-Sn}) and the fact that $|L| = |F_n | = |R| |I_n|$,
\begin{align*}
h(\beta)\ge h(\beta, \sP^R) &\ge \frac{1}{|L|}\log |\sP^{RL}|-\varepsilon \\
&\ge \frac{1}{|L|}\log |\sS_n|-\varepsilon \\
&\ge  h(\alpha, \sP)-\frac{1}{|R|}H(\sQ_a)-\varepsilon \log |\sP|-\varepsilon.
\end{align*}
Letting $\varepsilon\to 0$ yields $h(\beta)\ge h(\alpha, \sP)-\frac{1}{|R|}H(\sQ_a)=h(\alpha, \sP)-\frac{1}{|G: \left<a\right>|}H(\sQ_a)$.
\end{proof}

\begin{proof}[Proof of Theorem~\ref{T-entropy}]
We may assume, by conjugating $\beta$ by a Shannon orbit equivalence,
that $(Y,\nu ) = (X,\mu)$ and that the identity map on $X$ is a Shannon orbit equivalence
between the two actions. We may also assume that the actions are ergodic in view of the ergodic decomposition
and the entropy integral formula with respect to this decomposition. We may furthermore assume
that $G$ and $H$ are infinite, for the entropy of a p.m.p.\ action of a finite group
is equal to the Shannon entropy of the space (i.e., the supremum of the Shannon entropies of all
finite partitions of the space, a quantity preserved under measure isomorphism)
divided by the cardinality of the group.

If neither $G$ nor $H$ is virtually cyclic then $h(\alpha ) = h(\beta )$ by Theorem~4.1 and Proposition~3.28
of \cite{KerLi21}. If both of $G$ and $H$ are virtually cyclic then $h(\alpha ) = h(\beta )$
by Lemmas~\ref{L-zero} and \ref{L-control}.
If only one of $G$ and $H$ is virtually cyclic, say $G$ without loss of generality,
then $h(\alpha ) \geq h(\beta )$ by Theorem~4.1 and Proposition~3.28 of \cite{KerLi21}
while $h(\alpha ) \leq h(\beta )$ by Lemmas~\ref{L-zero} and \ref{L-control}, so that we again obtain
$h(\alpha ) = h(\beta )$.
\end{proof}

\section{Odometers}\label{S-odometer}

A {\it supernatural number} is a formal product of the form $\prod_p p^{k_p}$
where $p$ ranges over the primes and each $k_p$ belongs to $\{ 0,1,2,\dots , \infty \}$.
Let $q = \prod_p p^{k_p}$ be a supernatural number having infinitely many prime factors
counted with multiplicity (i.e., there are infinitely many nonzero $k_p$ or at least one $k_p$ is $\infty$).
Take a sequence $\{ n_j \}$ of natural numbers
such that the prime $p$ occurs exactly $k_p$ times among the prime factorizations
of the $n_j$ and form the inverse limit of the cyclic groups $\Zb / (n_1 \cdots n_j )\Zb$
with connecting maps that reduce mod $n_1 \cdots n_j$. The group $\Zb$ acts
by translation on the inverse limit in a continuous and uniquely ergodic way.
The resulting ergodic p.m.p.\ action we call the {\it $q$-odometer}.
One can also construct this action by taking the product $\prod_{j=1}^\infty \{ 0,1,\dots , n_j -1 \}$
equipped with the product of uniform probability measures and letting the canonical
generator of $\Zb$ act by addition by $(1,0,0,\dots )$ with carry over to the right,
with the maximal element $(n_j - 1)_{j=1}^\infty$ being sent to the minimal one $(0,0,0,\dots )$
(from the ergodic-theoretic viewpoint this special orbit can be ignored).
In either case the specific choice of the numbers $n_j$ does not matter up to either topological or measure conjugacy,
and so there is no ambiguity in the terminology.
In general we refer to these p.m.p.\ actions as {\it odometers}, or {\it $\Zb$-odometers} if we wish to explicitly
distinguish them from actions of other residually finite groups that can be similarly constructed as inverse limits of
finite quotients.

In the case that $k_p = \infty$ for every $p$ we speak of the {\it universal odometer}.

Odometers have discrete spectrum and hence, by the Halmos--von Neumann theorem,
are classified up to measure conjugacy
by their eigenvalues with multiplicity. For the above $q$-odometer these eigenvalues
(besides $1$)
are obtained by collecting together over all primes $p$ the $p$th roots of unity counted with multiplicity $k_p$.
In particular, supernatural numbers with infinitely many prime factors (counting multiplicity) form a complete invariant for
odometers up to measure conjugacy (and even flip conjugacy since the eigenvalues form a group).

In preparation for the proof of Theorem~\ref{T-odometer} we set up some notation and terminology.
Let $(X,\mu )$ be a probability space and $T:X\to X$ an aperiodic measure-preserving transformation.
Let $S$ be a partial transformation of $X$ (i.e., a bimeasurable bijection from one measurable subset of $X$
to another) such that for every $x$ in its domain the image $Sx$ is contained in the $T$-orbit of $x$. Set
\[
\sP_{S,T} = \{ \{ x\in\dom (S) : Sx = T^n x \} : n\in\Zb \} ,
\]
which is a partition of the domain of $S$. We will make use of the Shannon entropy $H(\sP_{S,T})$
of such disjoint collections. Note that if two partial transformations $S_1$ and $S_2$
agree on the intersection of their domains then their common extension $S$
satisfies $H(\sP_{S,T} ) \leq H(\sP_{S_1,T} ) + H(\sP_{S_2,T} )$.

By a {\it ladder} for $T$ we mean a pair $(\{ C_i \}_{i=0}^{n-1} , S)$ where $C_1 , \dots , C_{n-1}$
are pairwise disjoint measurable subsets of $X$ (the {\it rungs} of the ladder) and $S$ is a measure isomorphism
from $C_0 \sqcup \dots \sqcup C_{n-2}$ to $C_1 \sqcup \dots \sqcup C_{n-1}$
such that $SC_i = C_{i+1}$ for every $i=0,\dots , n-2$ and
for every $x\in C_0 \sqcup \dots \sqcup C_{n-2}$ the point $Sx$ is contained in the $T$-orbit of $x$.
By a {\it tower} for $T$ we mean a pair $(B,n)$ where $B$ is a measurable subset of $X$, $n\in\Nb$,
and the sets $B , T^{-1} B , T^{-2} B ,\dots , T^{-(n-1)} B$ (the {\it levels} of the tower) are pairwise disjoint.
A tower is in effect a special kind of ladder, but the terminological distinction will be useful
in the proof below.

We now proceed to establish Theorem~\ref{T-odometer}, which says that every $\Zb$-odometer
is Shannon orbit equivalent to the universal $\Zb$-odometer.

\begin{proof}[Proof of Theorem~\ref{T-odometer}]
Let $q$ be a supernatural number with infinitely many prime factors counting multiplicity
and let us show that the $q$-odometer and the universal odometer are Shannon orbit equivalent.
Let $T$ be the $q$-odometer acting on the space $(X,\mu )$, for which we will give a concrete realization below.
Take a sequence $\{ p_n \}$ of prime numbers in which every prime appears infinitely often.
Take a sequence $a_1 , a_2 , \dots$ of integers greater than $1$, to be further specified.
Take another sequence $1 = d_0 < d_1 < d_2 \dots$ of integers, to be further specified,
such that the successive quotients $d_n / d_{n-1}$ for $n\geq 1$ are integers greater than $2$
and, counting multiplicity, the prime factors appearing among these quotients are the same
as those appearing in $q$.
For $n\geq 1$ set $w_n = \prod_{i=1}^n a_i$
and $v_n = \sum_{i=1}^n w_i$.
The numbers $a_n$ and $d_n$ will play a role
in the recursive construction that we carry out below and will be specified once we complete
the description of the construction. For each $n\geq 1$ the ratio $w_n / d_n$ will be
between $\frac12$ and $1$ and will
tend to $1$ as $n\to\infty$. The construction will produce a transformation
$S$ of $X$ which is measure conjugate to the universal odometer.

We may regard $X$ as the product $\prod_{n=1}^\infty \{ 0,1,\dots , \frac{d_n}{d_{n-1}} -1 \}$
and $\mu$ as the product of uniform probability measures,
with $T$ acting by addition by $(1,0,0,\dots )$ with carry over to the right.
Then for each $m\in\Nb$ we have a canonical tower decomposition
$X = \bigsqcup_{k=0}^{d_m - 1} T^{-k} B_m$
where the base $B_m$ is the set of all $(x_n )_n \in \prod_{n=1}^\infty \{ 0,1,\dots , \frac{d_n}{d_{n-1}} - 1 \}$
such that $x_n = 0$ for $n=1,\dots, m$. We will refer to this tower as the $B_m$ tower.

By a double recursion over $n\in\Nb$ (on the outside)
and $m\geq n$ (on the inside, for a fixed $n$), we will construct an array of ladders
$\sL_{n,m} = ( \{ C_{n,m,i} \}_{i=0}^{a_n-1} , S_{n,m} )$ for $T$. For a fixed $n$
the ladders $\sL_{n,m}$ for $m\geq n$ will be pairwise disjoint.
The construction will be such that
\begin{enumerate}
\item for all $n\leq l \leq m$ and $0\leq i\leq a_n-1$,
each level in the $B_m$ tower is either contained in $C_{n,l,i}$ or disjoint from it,

\item for all $m\geq n>1$ one has
$\bigsqcup_{i=0}^{a_n - 1} C_{n,m,i} \subseteq \bigsqcup_{l=n-1}^m C_{n-1,l,0}$
(i.e., the ladder $\sL_{n,m}$ is contained in the union of the bases of the ladders
$\sL_{n-1,l}$ for $l=n-1,\dots , m$).
\end{enumerate}

Now let $m \geq n \geq 1$ and suppose that we have completed the stages of the outer recursion from $1$ to $n-1$
(unless we are in the base case $n=1$) and of the inner recursion from $n$ to $m-1$
(unless we are in the base case $m=n$). We break into three cases.

(I) Case $m=n=1$. Here we define
\begin{itemize}
\item $C_{1,1,i} = T^{-i} B_1$ for $i=0,\dots , a_1 -1$,

\item $S_{1,1} = T^{-1}$ on $\bigsqcup_{i=0}^{a_1 - 2} C_{1,1,i}$.
\end{itemize}
The ladder $\sL_{1,1}$ is then defined to be $( \{ C_{1,1,i} \}_{i=0}^{a_1-1} , S_{1,1} )$.

(II) Case $m=n>1$.
By the recursive hypothesis (i), for $l=n-1,n$
each level of the $B_n$ tower is either contained in $C_{n-1,l,0}$ or disjoint from it.
Let $r_{n,n} \in\Nb$ and $0\leq s(n,n,0) < s(n,n,1) < \dots < s(n,n,r_{n,n} ) < d_n$
be such that the levels of the $B_n$ tower
contained in the set
\[
W_{n,n} := C_{n-1,n-1,0} \sqcup C_{n-1,n,0}
\]
are precisely $T^{-s(n,n,0)} B_n , T^{-s(n,n,1)} B_n , \dots , T^{-s(n,n,r_{n,n}-1)} B_n$
(the duplication of $n$ in the notation is for consistency with case III below).
We define
\begin{itemize}
\item $C_{n,n,i} = T^{-s(n,n,i)} B_n$ for every $i=0,\dots , a_n-1$,

\item $S_{n,n} = T^{-s(n,n,i+1) + s(n,n,i)}$ on $T^{-s(n,n,i)} B_n$ for every $i=0,\dots , a_n -2$.
\end{itemize}
The ladder $\sL_{n,n}$ is then defined to be $(\{ C_{n,n,i} \}_{i=0}^{a_n-1} , S_{n,n} )$.

(III) Case $m>n$.
If $n=1$ we write $W_{n,m}$ for the set
$X\setminus \bigsqcup_{l=1}^{m-1} \bigsqcup_{i=0}^{a_1-1} C_{1,l,i}$, while if $n>1$
we write $W_{n,m}$ for the set
\[
\bigg(\bigsqcup_{l=n-1}^m C_{n-1,l,0} \bigg) \setminus
\bigg(\bigsqcup_{l=n}^{m-1} \bigsqcup_{i=0}^{a_n - 1} C_{n,l,i} \bigg) .
\]
By the recursive hypothesis (i), each level of the $B_m$ tower is either contained in $W_{n,m}$
or disjoint from it.
Let $0\leq s(n,m,0) < s(n,m,1) < \dots < s(n,m,r_{n,m}) < d_m$ be such that the levels of the $B_m$ tower
contained in $W_{n,m}$ are precisely
$T^{-s(n,m,0)} B_m , T^{-s(n,m,1)} B_m , \dots , T^{-s(n,m,r_{n,m})} B_m$.
Set $t_{n,m}$ to be the largest positive integer such that
\begin{gather}\label{E-q}
a_n t_{n,m} \leq r_{n,m} .
\end{gather}
For each $j=0,\dots , t_{n,m} -1$ define
\begin{itemize}
\item $C_{n,m,i}^{(j)} = T^{-s(n,m,a_n j+i)} B_m$ for every $i=0,\dots , a_n-1$,

\item $S_{n,m}^{(j)} = T^{-s(n,m,a_n j+i+1) + s(n,m,a_n j+i)}$ on $T^{-s(n,m,a_n j+i)} B_m$
for every $i=0,\dots , a_n -2$.
\end{itemize}
Then $\sL_{n,m}^{(j)} := ( \{ C_{n,m,i}^{(j)} \}_{i=0}^{a_n-1} , S_{n,m}^{(j)} )$ for $j=0,\dots , t_{n,m} -1$ are
ladders for $T$, and they are pairwise disjoint.
We combine them to create a single ladder
$\sL_{n,m} := (\{ C_{n,m,i} \}_{i=0}^{a_n-1} , S_{n,m} )$
by setting $C_{n,m,i} = \bigsqcup_{j=0}^{t_{n,m}-1} C_{n,m,i}^{(j)}$ and
defining $S_{n,m}$ to coincide with $S_{n,m}^{(j)}$ on $\bigsqcup_{i=0}^{a_n - 2} C_{n,m,i}^{(j)}$
for every $j=0,\dots , t_{n,m} - 1$.
This completes the construction. Note that (i) and (ii) are satisfied for $n$ and $m$.

For convenience we extend the notation $t_{n,m}$ from case (III) by setting $t_{n,n} = 1$ for every $n\geq 1$.

For each $j = 0,\dots , t_{n,m} - 1$ we define the {\it spread} of the ladder $\sL_{n,m}^{(j)}$ to be the
maximum of the positive integers
$s(n,m,a_n j + i+1) - s(n,m,a_n j + i)$ over all $i=0,\dots , a_n -2$
(this is the maximum distance, in terms of levels in the $B_m$ tower, between the rungs of the ladder).
By construction one sees that when $m > n$ the spreads of the ladders
$\sL_{n,m}^{(j)}$ for $j=0,\dots , , t_{n,m} -1$
are bounded above by $d_{m-1}$ (for this it is important that there is always
at least one unused level left over at the top of the tower when building the ladders
$\sL_{n,m}$ so that the spread of the ladders at the next stage $m+1$ for a fixed $n$
is at most $d_m$, and this is guaranteed by $(\ref{E-q})$).

Let $n\in\Nb$. For each $i=0,\dots, a_n - 1$ set $C_{n,i} = \bigsqcup_{m=n}^\infty C_{n,m,i}$ and define
$S_n$ on $\bigsqcup_{m=n}^\infty \dom (S_{n,m} )$ by setting $S_n = S_{n,m}$ on $\dom (S_{n,m} )$
for every $m\geq n$. Then $\sL_n := (\{ C_{n,i} \}_{i=0}^{a_n -1} , S_n )$ is a ladder for $T$.

We use the ladders $\sL_{n,m}$ to build the transformation $S$ as follows.
For $m\geq n\geq 1$ set
\[
D_{n,m} = S_1^{a_1 - 1} S_2^{a_2 - 1} \cdots S_{n-1}^{a_{n-1} - 1}
\bigg(\bigsqcup_{i=0}^{a_n-2} C_{n,m,i} \bigg),
\]
The sets $D_{n,m}$ are pairwise disjoint and their union has measure $1$.
We then define $S$ so that on $D_{n,m}$ it is given by
\[
S_{n,m} S_{n-1}^{-a_{n-1} + 1} \cdots S_2^{-a_2 + 1} S_1^{-a_1 + 1} .
\]
Up to measure conjugacy, this yields an odometer:
for each $n$, the union of the sets $D_{n,m}$ over $m\geq n$ represents the set of points
whose coordinates in the odometer from $1$ to $n-1$ are maximum
but whose coordinate at $n$ is not maximum, so that $S$ produces a roll-over of the
first $n-1$ coordinates, increments the $n$th coordinate by $1$, and does not change any other coordinates.

We assume $d_n$ to be taken large enough to guarantee that $r_{n,n} \geq p_n$
(or $d_1 \geq p_1$ in the case $n=1$).
Then the largest multiple of $p_n$ no greater than $r_{n,n}$ (or $d_1 - 1$ in the case $n=1$)
is nonzero, and
we declare $a_n$ to be this multiple (we still need to further specify $d_n$ below, but the way in which
$a_n$ depends on $d_n$ is not affected by this).
This will ensure that
$S$ is the universal odometer. In the course of what follows we will specify how large the numbers
$d_n$ should be chosen so as to obtain a Shannon orbit equivalence.

For $m\geq n\geq 1$ define $R_{n,m}$ to be the restriction of $S_n$ to
$\bigsqcup_{l=n}^m \dom (S_{n,l} )$.
For $n\in\Nb$ and $A\subseteq \bigsqcup_{l=n}^m C_{n,l,0}$ write $\bar{R}_{n,m} A$ for the set
$\bigsqcup_{i=0}^{a_n-1} R_{n,m}^i A$ (i.e., the saturation of $A$ within the ladders
$\sL_{n,l}$ for $l=n,\dots , m$),
and for $m\geq n$ set $A_{n,m} = \bigsqcup_{l=n}^m C_{n,l,0}$ and
\[
E_{n,m} = \bar{R}_{1,m} \bar{R}_{2,m} \cdots \bar{R}_{n,m} A_{n,m}
= \bigsqcup_{i=0}^{a_1 \cdots a_n -1} S^i A_{n,m} .
\]
Note that $E_{n,m} \subseteq E_{n,m'}$ when $m' > m$, $E_{n,m} \supseteq E_{n',m}$ when $n' > n$,
and $E_{n,n} \subseteq E_{n-1,n}$ when $n > 1$.
Also, for each $n\geq 1$ the increasing union $\bigcup_{m=n}^\infty E_{n,m}$ has measure $1$.

By choosing the numbers $d_n$ to be large enough in succession,
we can arrange for the following additional conditions to hold.
First, if for $n\geq 1$ we set
\begin{gather}\label{E-beta}
\beta_n = \frac{1 + 2(v_{n-1} + p_n w_{n-1})}{d_n}
\end{gather}
then we may assume that $\lim_{n\to\infty} \beta_n = 0$ and
\begin{align}\label{E-finitesum}
\sum_{n=1}^\infty \beta_n \log \frac{9w_n^2 d_n}{\beta_n} < \infty .
\end{align}
Write $\theta_{n,m} = (1+w_{n-1} ) d_{m-1}$.
For $n\geq 3$ the ratio $w_{n-1} / d_{n-1}$ is at least
$1 - (v_{n-2} + p_{n-1} w_{n-2} )/d_{n-1}$ (see (\ref{E-Kn})) and thus can be assumed to be
no smaller than $\frac12$, so that for $n \geq 2$ we can make the quantity
$\frac{1}{w_{n-1}} \log d_{n-1}$ small enough to ensure that
\begin{gather}\label{E-n}
-\frac{1}{w_{n-1}} \log \bigg(\frac{1}{w_{n-1}} \cdot \frac{1}{\theta_{n,n}} \bigg) < \frac{1}{2^{n+2}} .
\end{gather}
We may moreover assume, for $n\geq 2$, that
\begin{gather}\label{E-n2}
-\frac{v_{n-1} + p_n w_{n-1}}{d_n} \log \bigg( \frac{v_{n-1} + p_n w_{n-1}}{d_n}
\cdot \frac{1}{\theta_{n,n+1}} \bigg) < \frac{1}{2^{n+2}} ,
\end{gather}
and also, for every $m \geq 4$ and $p=1,\dots , m-2$, that
\begin{gather*}
 -\frac{v_p}{d_{m-1}} \log \bigg( \frac{v_p}{d_{m-1}} \cdot
\frac{1}{\theta_{p,m}} \bigg) < \frac{1}{2^{m+1}} ,
\end{gather*}
which implies that for all $n\geq 2$ we have
\begin{gather}\label{E-msum}
\sum_{m=n+2}^\infty -\frac{v_n}{d_{m-1}} \log \bigg( \frac{v_n}{d_{m-1}} \cdot
\frac{1}{\theta_{n,m}} \bigg) \leq \sum_{m=n+2}^\infty \frac{1}{2^{m+1}} \leq \frac{1}{2^{n+1}} .
\end{gather}
Finally, we may also assume that
\begin{gather}\label{E-n1}
\sum_{m=2}^\infty -\frac{a_1}{d_m}
\log \frac{a_1}{d_m^2}
< \infty .
\end{gather}

Let $n\geq 1$. Define $K_n$ to be the union of the tower levels $T^{-i} B_n$ with
$0 < i \leq d_n - 1$ such that both $T^{-i}B_n$ and $T^{-(i-1)}B_n$ are contained in $E_{n, n}$,
i.e., the set of all $x\in E_{n,n} \setminus B_n$ such that $Tx \in E_{n,n}$. Given an $x\in K_n$,
we wish to show that $Tx$ can be expressed as $S^k x$ for some $k$ within certain bounds.
Since both $x$ and $Tx$ belong to $E_{n,n}$,
by construction there exist a $y\in X$ in some rung of the ladder $\sL_{n,n}$
and a $k_0 \geq 0$ with $k_0 \leq w_{n-1} -1$ such that $x = S^{k_0} y$,
as well as a $z\in X$ in some rung of the ladder $\sL_{n,n}$
and a $k_1 \geq 0$ with $k_1 \leq w_{n-1} -1$ such that $Tx = S^{k_1} z$.
Since $x=S_1^{l_1}S_2^{l_2} \cdots S_{n-1}^{l_{n-1}}y$ for some $l_1,\dots , l_{n-1}$ satisfying $0\le l_j\le a_j-1$ for all $j$,
and for all $1\leq p\leq l$ the spreads of the ladders $\sL_{p,l}^{(j)}$ for $j=0,\dots , t_{p,l} - 1$
are bounded above by $d_{l-1}$,
the jump in levels within the $B_n$ tower in going from $y$ to $x$
is at most $(a_1 + \cdots + a_{n-1} )d_{n-1}$. Similarly, the jump in levels within the $B_n$ tower in going from
$z$ to $Tx$ is at most $(a_1 + \cdots + a_{n-1} ) d_{n-1}$. Thus the jump in levels within the $B_n$ tower in going from
$y$ to $z$ is at most $2w_{n-1} d_{n-1} + 1$. Since $y$ and $z$ both lie in rungs of the ladder $\sL_{n,n}$,
it follows that we have $y = S_n^j z$ for some $j$ satisfying $|j| \leq 2w_{n-1} d_{n-1} + 1$
(this will typically be a very crude bound, governed by the extreme scenario in which
each level of the $B_n$ tower between those containing $y$ and $z$
is a rung of the ladder $\sL_{n,n}^{(0)}$), which implies
that $y = S^{k_2} z$ for some $k_2$ satisfying
\[
|k_2 | = w_{n-1} |j| \leq 3w_{n-1}^2 d_{n-1} .
\]
Putting things together, we conclude that $Tx = S^k x$ where $k$ satisfies
\begin{gather}\label{E-crude}
|k| \leq k_0 + k_1 + |k_2| \leq 2(w_{n-1} - 1) + 3w_{n-1}^2 d_{n-1} \leq 4w_{n-1}^2 d_{n-1} .
\end{gather}

Observe next that, for $n\geq 2$,
\begin{align}\label{E-Kn}
\mu (X\setminus E_{n,n} ) &\leq \sum_{j=1}^{n-1} \frac{(r_{j,n} - a_j t_{j,n} + 1)w_{j-1}}{d_n}
+ \frac{(r_{n,n} - a_n + 1)w_{n-1}}{d_n} \\
&\leq \sum_{j=1}^{n-1} \frac{a_j w_{j-1}}{d_n} + \frac{p_n w_{n-1}}{d_n} \notag \\
&= \frac{v_{n-1} + p_n w_{n-1}}{d_n} . \notag
\end{align}
Since $K_n$ can be obtained from $E_{n,n} \setminus B_n$ by removing $T^{-1} (X\setminus E_{n,n} )$,
we thereby obtain, recalling the definition of $\beta_n$ from (\ref{E-beta}),
\begin{align}\label{E-Knbeta}
\mu (X\setminus K_n )
\leq \mu (B_n ) + 2\mu (X\setminus E_{n,n} )
\stackrel{(\ref{E-Kn})}{\leq}
\beta_n .
\end{align}
Since $\beta_n \to 0$ this shows that $\mu (K_n ) \to 1$.
Thus $S$ generates the same equivalence relation as $T$ modulo a null set.
For $n>1$ we have, setting $K_n' = K_n \setminus K_{n-1}$ and using (\ref{E-Knbeta}),
\begin{gather}\label{E-Fn}
\mu (K_n' ) \leq \mu (X\setminus K_{n-1} ) \leq \beta_{n-1} .
\end{gather}
The cocycle partition $\sP_{T,S}$ of $T$ with respect to $S$ then satisfies,
using the fact that a uniform partition of a measurable set maximizes the entropy among all
partitions of the set with a given cardinality, and writing
$\lambda_{n-1} = 2\cdot 4w_{n-1}^2 d_{n-1} + 1$ for brevity,
\begin{align*}
H(\sP_{T,S} )
&\stackrel{(\ref{E-crude})}{\leq} -\mu (K_1 ) \log \mu (K_1 )
+ \sum_{n=2}^\infty \lambda_{n-1}
\bigg( -\frac{\mu (K_n' )}{\lambda_{n-1}} \log \frac{\mu (K_n' )}{\lambda_{n-1}}\bigg) \\
&\stackrel{(\ref{E-Fn})}{\leq} -\mu (K_1 ) \log \mu (K_1 ) +
\sum_{n=2}^\infty \beta_{n-1} \log \frac{9w_{n-1}^2 d_{n-1}}{\beta_{n-1}} \\
&\stackrel{(\ref{E-finitesum})}{<} \infty .
\end{align*}

Finally, we show that $H(\sP_{S,T} )$ is also finite. For $n\geq 1$ set $D_n = \bigsqcup_{m=n}^\infty D_{n,m}$.
Notice that for $m-1\geq n\geq 1$ we have $D_{n,m} \subseteq X\setminus E_{n,m-1}$, and so
in the case $m-1 > n \geq 1$ we obtain
\begin{gather}\label{E-Dnm}
\mu (D_{n,m})
\leq \mu (X\setminus E_{n,m-1} )
\leq \frac{a_1}{d_{m-1}} + \frac{a_1 a_2}{d_{m-1}} + \cdots + \frac{a_1 \cdots a_n}{d_{m-1}} = \frac{v_n}{d_{m-1}} .
\end{gather}
Suppose $m > n=1$. Then for every $x\in D_{1,m}$ we have
$Sx = T^{-k} x$ for some $1 \leq k \leq d_{m-1}$ (since the distance between successive rungs
in each $\sL_{1,m}^{(j)}$ is at most $d_{m-1}$).
Using as before the fact that a uniform partition of a measurable set
maximizes the entropy among all partitions of the set with a given cardinality, and also using
the fact that $\sP_{S|_{D_{1,1}},T}$ and $\sP_{S|_{D_{1,2}},T}$ are finite collections,
we obtain
\begin{align*}
H(\sP_{S|_{D_1} ,T} )
&\leq \sum_{m=1}^\infty H(\sP_{S|_{D_{1,m}} ,T} ) \\
&\leq H(\sP_{S|_{D_{1,1}} ,T} ) + H(\sP_{S|_{D_{1,2}} ,T} ) + \sum_{m=3}^\infty
d_{m-1} \bigg( -\frac{\mu (D_{1,m})}{d_{m-1}} \log \frac{\mu (D_{1,m})}{d_{m-1}} \bigg) \\
&\stackrel{(\ref{E-Dnm})}{\leq} H(\sP_{S|_{D_{1,1}} ,T} ) + H(\sP_{S|_{D_{1,2}} ,T} )
+ \sum_{m=3}^\infty -\frac{a_1}{d_{m-1}}
\log \bigg( \frac{a_1}{d_{m-1}}\cdot\frac{1}{d_{m-1}} \bigg) \\
&\stackrel{(\ref{E-n1})}{<} \infty .
\end{align*}

Now suppose $m\geq n>1$. By definition $S = S_{n,m} S_{n-1}^{-a_{n-1} + 1} \cdots S_1^{-a_1 + 1}$ on $D_{n,m}$.
On $\dom (S_{n,m} ) = \bigsqcup_{i=0}^{a_{n-1} - 2} C_{n,m,i}$ we have $S_{n,m} x = T^{-k} x$ for
some $0\leq k \leq d_{m-1}$.
On the other hand, for each $1\leq p \leq n-1$, on $\dom (S_{p,m} )$ we have
$S_p^{-a_p + 1} = T^k$ for some $0 \leq k \leq a_p d_{m-1}$
(using the bound $d_{m-1}$ on the spreads of the ladders $\sL_{p,m}^{(j)}$).
Therefore on $D_{n,m}$ we have $S = T^k$ for some nonzero $k$ with $-d_{m-1} \leq k \leq w_{n-1} d_{m-1}$,
and there are at most $(1+w_{n-1} ) d_{m-1}$ possibilities for this $k$.
Again using the fact that a uniform partition of a measurable set maximizes the entropy
among all partitions of the set with a given cardinality,
and writing $\theta_{n,m} = (1+w_{n-1} ) d_{m-1}$ as before, it follows that
\begin{align*}
H(\sP_{S|_{D_n} ,T} )
&\leq \sum_{m=n}^\infty H(\sP_{S|_{D_{n,m}} ,T} ) \\
&\leq \theta_{n,n} \bigg( -\frac{\mu (D_{n,n} )}{\theta_{n,n}} \log \frac{\mu (D_{n,n} )}{\theta_{n,n}} \bigg) \\
&\hspace*{20mm} \ + \theta_{n,n+1} \bigg( -\frac{\mu (D_{n,n+1} )}{\theta_{n,n+1}} \log \frac{\mu (D_{n,n+1} )}{\theta_{n,n+1}} \bigg) \\
&\hspace*{20mm} \ + \sum_{m=n+2}^\infty \theta_{n,m}
\bigg( -\frac{\mu (D_{n,m} )}{\theta_{n,m}} \log \frac{\mu (D_{n,m} )}{\theta_{n,m}} \bigg) \\
&\stackrel{(\ref{E-Dnm})}{\leq}  -\frac{1}{w_{n-1}} \log \bigg(\frac{1}{w_{n-1}} \cdot \frac{1}{\theta_{n,n}} \bigg) \\
&\hspace*{20mm} \ - \frac{v_{n-1} + p_n w_{n-1}}{d_n} \log \bigg( \frac{v_{n-1} + p_n w_{n-1}}{d_n}
\cdot \frac{1}{\theta_{n,n+1}} \bigg) \\
&\hspace*{20mm} \ + \sum_{m=n+2}^\infty -\frac{v_n}{d_{m-1}} \log \bigg( \frac{v_n}{d_{m-1}} \cdot
\frac{1}{\theta_{n,m}} \bigg) \\
&\stackrel{(\ref{E-n},\ref{E-n2},\ref{E-msum})}{\leq}
\frac{1}{2^{n+2}} + \frac{1}{2^{n+2}} + \frac{1}{2^{n+1}} = \frac{1}{2^n} .
\end{align*}
We then have
\begin{gather*}
H(\sP_{S,T} ) \leq \sum_{n=1}^\infty H(\sP_{S|_{D_n} ,T} )
< H(\sP_{S|_{D_1} ,T} ) + \sum_{n=2}^\infty \frac{1}{2^n}
< \infty . \qedhere
\end{gather*}
\end{proof}

\begin{remark}\label{R-odometer}
If one is permitted to use two different acting groups then one can show much more easily that
certain actions which are known not to be
integrably orbit equivalent are in fact Shannon orbit equivalent. This phenomenon was
observed in \cite{DelKoiLeMTes20} in the context of measure equivalence for groups.
Consider for example the odometer $\Zb$-action on $\{ 0,1,2,3 \}^\Nb$ and the action of $\Zb^2$ on
$\{ 0,1 \}^\Nb \times \{ 0,1 \}^\Nb = (\{ 0,1 \} \times \{ 0,1 \} )^\Nb$
implemented on the canonical generators by $T\times\id$ and $\id\times T$
where $T$ denotes the odometer transformation of $\{ 0,1\}^\Nb$. Let
$\sigma : \{ 0,1 \} \times \{ 0,1 \} \to \{0,1,2,3 \}$ be the bijection given by
$0\mapsto (0,0)$, $1\mapsto (0,1)$, $2\mapsto (1,0)$, and $3\mapsto (1,1)$,
and define the homeomorphism
$\Phi : (\{ 0,1 \} \times \{ 0,1 \} )^\Nb \to \{ 0,1,2,3 \}^\Nb$ by $(a_n , b_n )_n \mapsto (\sigma (a_n ,b_n ))_n$.
Then $\Phi$ is an orbit equivalence, and the cocycle partition $\sP$ associated to the generator of $\Zb$
is a coarsening of the partition $\sQ$ of $\{ 0,1,2,3 \}^\Nb$ consisting of sets of the form $\prod_n A_n$
where for some $N$ one has $A_n = \{ 3 \}$ for $n=1,\dots , N$, $A_{N+1} = \{ j \}$ for some $j\in\{ 0,1,2 \}$,
and $A_n = \{ 0,1,2,3 \}$ for $n\geq N+2$. The Shannon entropy of $\sQ$ is
$\sum_{n=1}^\infty -3\cdot 4^{-n} \log 4^{-n} < \infty$,
so that $\sP$ has finite Shannon entropy. The cocycle partitions associated to the
canonical generators of $\Zb^2$ can similarly be seen to have finite Shannon entropy, and so the
two actions are Shannon orbit equivalent.
On the other hand, a result of Bowen \cite{Aus16b} shows that if two
free p.m.p.\ actions of finite generated groups are integrably orbit equivalent then the groups must have the
same growth, which is not the case for $\Zb$ and $\Zb^2$.
\end{remark}

\end{document}